\providecommand{\U}[1]{\protect \rule{.1in}{.1in}}
\newtheorem{theorem}{Theorem}[section]
\newtheorem{corollary}[theorem]{Corollary}
\newtheorem{example}[theorem]{Example}
\newtheorem{proposition}[theorem]{Proposition}
\newtheorem{remark}[theorem]{Remark}
\newenvironment{proof}[1][Proof]{\noindent \textbf{#1.} }{\  $\Box$}
\numberwithin{equation}{section}
\begin{document}

\title{Relationship between stochastic maximum principle and dynamic programming
principle under convex expectation}
\author{Xiaojuan Li\thanks{Department of Mathematics, Qilu Normal University, Jinan
250200, China. lxj110055@126.com. }
\and Mingshang Hu \thanks{Zhongtai Securities Institute for Financial Studies,
Shandong University, Jinan, Shandong 250100, China. humingshang@sdu.edu.cn.
Research supported by NSF (No. 12326603, 11671231).} }
\maketitle

\textbf{Abstract}. In this paper, we study the relationship between maximum
principle (MP) and dynamic programming principle (DPP) for forward-backward
control system under consistent convex expectation dominated by $G$%
-expectation. Under the smooth assumptions for the value function, we get the
relationship between MP and DPP under a reference probability by establishing
a useful estimate. If the value function is not smooth, then we obtain the
first-order sub-jet and super-jet of the value function at any $t$. However,
the processing method in this case is much more difficult than that when $t$
equals $0$.

{\textbf{Key words}. } Backward stochastic differential equation, Maximum
principle, Dynamic programming principle, $G$-expectation, convex expectation

\textbf{AMS subject classifications.} 93E20, 60H10, 35K15

\addcontentsline{toc}{section}{\hspace*{1.8em}Abstract}

\section{Introduction}

Volatility uncertainty is an important research topic in economy and finance,
which cannot be studied in a probabilistic framework (see \cite{DM, EJ2, FPW,
MPZ, Peng2004} and the references therein). Based on \cite{Peng2004}, Peng
\cite{P07a, P08a} proposed the theory of $G$-expectation (consistent sublinear
expectation) and established the theory of stochastic differential equation
driven by $G$-Brownian motion $B$ ($G$-SDE) under $G$-expectation
$\mathbb{\hat{E}}$. Denis et al. \cite{DHP11} proved that the $G$-expectation
$\mathbb{\hat{E}}$ can be represented as the supremum of a family of linear
expectations $\{E_{P}:P\in \mathcal{P}\}$, where $\mathcal{P}$ is a set of
weakly compact and nondominated probability measures. Since $\mathcal{P}$ is
nondominated, the representation of conditional $G$-expectation is very
difficult and completely different from the representation of conditional
$g$-expectation (see \cite{P1997} for the definition of $g$-expectation).
Soner et al. \cite{STZ} first introduced $\mathcal{P}(t,P)$ (see
(\ref{nn-e3-6}) for definition) for each $P\in \mathcal{P}$ and obtained the
representation theorem for conditional $G$-expectation $\mathbb{\hat{E}}_{t}$
based on $\mathcal{P}(t,P)$, which plays a key role in establishing the theory
of a new type of fully nonlinear backwad SDE, called $2$BSDE (see
\cite{STZ11}). Recently, the representation theorem for conditional convex
$\tilde{G}$-expectation $\mathbb{\tilde{E}}_{t}$, which is dominated by
$G$-expectation $\mathbb{\hat{E}}_{t}$, has been obtained in \cite{LH} based
on $\mathcal{P}(t,P)$.

Different from the method in \cite{STZ11}, Hu et al. \cite{HJPS1, HJPS}
established the theory of BSDE driven by $G$-Brownian motion ($G$-BSDE). The
reader may refer to \cite{HYH, HTW, LRT} and the references therein for the
recent developments on these two kinds of BSDEs. The maximum principle (MP)
and the dynamic programming principle (DPP) for forward-backward control
system under $G$-expectation have been obtained in \cite{HJ0} and \cite{HJ1}
respectively. Li \cite{L1} studied the relationship between MP and DPP for the
following forward-backward control system under $G$-expectation:%
\begin{equation}
dX_{t}^{u}=h(t,X_{t}^{u},u_{t})d\langle B\rangle_{t}+\sigma(t,X_{t}^{u}%
,u_{t})dB_{t},\text{ }X_{0}^{u}=x_{0}\in \mathbb{R}, \label{nn-e1-1}%
\end{equation}%
\begin{equation}
dY_{t}^{u}=-g(t,X_{t}^{u},Y_{t}^{u},Z_{t}^{u},u_{t})d\langle B\rangle
_{t}+Z_{t}^{u}dB_{t}+dK_{t}^{u},\text{ }Y_{T}^{u}=\Phi(X_{T}^{u}),
\label{nn-e1-2}%
\end{equation}
where the control domain $U\subset \mathbb{R}^{m}$ is a given nonempty convex
and compact set, the set of admissible controls is denoted by $\mathcal{U}%
[0,T]$. The cost functional is defined by $J(u):=Y_{0}^{u}$, and the optimal
control problem is%
\begin{equation}
J(u^{\ast})=\min_{u\in \mathcal{U}[0,T]}J(u). \label{nn-e1-3}%
\end{equation}
The solution of control system (\ref{nn-e1-1}) and (\ref{nn-e1-2})
corresponding to an optimal control $u^{\ast}$ is denoted by $(X^{\ast
},Y^{\ast},Z^{\ast},K^{\ast})$. By using the strict comparison theorem for
BSDEs under a reference probability $P^{\ast}$ and the weak convergence
method, Li derived $Y_{t}^{\ast}=V(t,X_{t}^{\ast})$, $P^{\ast}$-a.s., under
the smooth assumptions for the value function $V(\cdot,\cdot)$ and
$D_{x}^{1,-}V(t,x)$, $D_{x}^{1,+}V(t,x)$ in the viscosity sense at the initial
time $(t,x)=(0,x_{0})$. In particular, $Y_{t}^{\ast}=V(t,X_{t}^{\ast})$ may
not hold for other $P\in \mathcal{P}$ and $D_{x}^{1,+}V(0,x_{0})$ may be empty,
which are different from the classical results. The relationship between MP
and DPP in a probabilistic framework can be found in \cite{BCM, HJX, NSW,
J.Yong} and the references therein.

In some financial problems, we need to consider convex expectations (see
\cite{FS}). So it is necessary to study the stochastic optimal control problem
under convex expectation. The MP and DPP for forward-backward control system
under $\tilde{G}$-expectation dominated by $G$-expectation have been obtained
in \cite{LH} and \cite{MJL0} respectively. In this paper, we want to study the
relationship between MP and DPP for forward-backward control system under
$\tilde{G}$-expectation $\mathbb{\tilde{E}}$, which consists of (\ref{nn-e1-1}%
) and the following BSDE under $\tilde{G}$-expectation%
\begin{equation}
Y_{t}^{u}=\mathbb{\tilde{E}}_{t}\left[  \Phi(X_{T}^{u})+\int_{t}^{T}%
g(s,X_{s}^{u},Y_{s}^{u},u_{s})d\langle B\rangle_{s}\right]  . \label{nn-e1-4}%
\end{equation}
The optimal control problem is defined by (\ref{nn-e1-3}).

During the process of studying the above mentioned problem, we mainly
encountered two key difficult problems. One of the difficulties is that
(\ref{nn-e1-4}) is not a BSDE under a reference probability $P^{\ast}$, so we
cannot use the strict comparison theorem for BSDEs to get the relationship
between MP and DPP under the smooth assumptions for the value function
$V(\cdot,\cdot)$. Therefore, we establish a useful estimate (see
(\ref{nnn-e3-4})) based on the representation theorem for $\mathbb{\tilde{E}%
}_{t}$, and then obtain the relationship between MP and DPP under $P^{\ast}$.

If the value function $V(\cdot,\cdot)$ is not smooth, we want to study
$D_{x}^{1,-}V(t,X_{t}^{\ast})$ and $D_{x}^{1,+}V(t,X_{t}^{\ast})$ in the
viscosity sense, which has more difficulties than those we encountered when
studying $D_{x}^{1,-}V(0,x_{0})$ and $D_{x}^{1,+}V(0,x_{0})$. Because
$X_{t}^{\ast}$ is a random variable, we have to study $D_{x}^{1,-}%
V(t,X_{t}^{\ast})$ and $D_{x}^{1,+}V(t,X_{t}^{\ast})$ under a probability $P$.
From Steps 6-7 in the proof of Theorem \ref{MP-DPP-2}, we understand that we
can obtain $D_{x}^{1,-}V(t,X_{t}^{\ast})$ and $D_{x}^{1,+}V(t,X_{t}^{\ast})$
on the set $\{Y_{t}^{\ast}=V(t,X_{t}^{\ast})\}$, but we cannot obtain
$D_{x}^{1,-}V(t,X_{t}^{\ast})$ and $D_{x}^{1,+}V(t,X_{t}^{\ast})$ on the set
$\{Y_{t}^{\ast}\not =V(t,X_{t}^{\ast})\}$. So we define
\[
\mathcal{\tilde{P}}_{t}^{\ast}=\{P\in \mathcal{P}:Y_{t}^{\ast}=V(t,X_{t}^{\ast
}),\text{ }P\text{-a.s.}\}
\]
and then prove that $\mathcal{\tilde{P}}_{t}^{\ast}$ is nonempty, thus, we
study $D_{x}^{1,-}V(t,X_{t}^{\ast})$ and $D_{x}^{1,+}V(t,X_{t}^{\ast})$ under
$P\in \mathcal{\tilde{P}}_{t}^{\ast}$. However, in order to obtain $D_{x}%
^{1,-}V(t,X_{t}^{\ast})$ and $D_{x}^{1,+}V(t,X_{t}^{\ast})$ under $P$, we need
to prove the almost sure convergence of $\{p_{t}^{Q_{n}}:n\geq1\}$, where
$(p^{Q_{n}},q^{Q_{n}},N^{Q_{n}})$ is the solution of the adjoint equation
(\ref{nn-e2-5}) under $Q_{n}$ and $Q_{n}\in \mathcal{P}_{\varepsilon_{n}}%
^{\ast}(t,P)$ (see (\ref{new-nnn-e3-1}) for definition) with $\varepsilon
_{n}\downarrow0$ as $n\rightarrow \infty$. Since $\mathcal{P}$ is weakly
compact, we can only prove that $\{p_{t}^{Q_{n}}:n\geq1\}$ has a subsequence
which converges weakly in $L^{2}$ under $P$. In this way, we are unable to
acquire that $\{p_{t}^{Q_{n}}:n\geq1\}$ is almost surely convergent, which is
the second difficulty in the entire process of obtaining $D_{x}^{1,-}%
V(t,X_{t}^{\ast})$ and $D_{x}^{1,+}V(t,X_{t}^{\ast})$ under $P$. Through some
detailed estimates, we can obtain an important estimate (\ref{nnn-e3-29}).
Then we can discover an interesting conclusion: For each $Q_{n}\in
\mathcal{P}_{\varepsilon_{n}}^{\ast}(t,P)$, when $\varepsilon_{n}\downarrow0$
as $n\rightarrow \infty$, then $p_{t}^{Q_{n}}\rightarrow \bar{p}_{t}$, $P$-a.s.;
Meanwhile, for each $Q_{n}^{\prime}\in \mathcal{P}_{\varepsilon_{n}^{\prime}%
}^{\ast}(t,P)$, when $\varepsilon_{n}^{\prime}\uparrow0$ as $n\rightarrow
\infty$, then $p_{t}^{Q_{n}^{\prime}}\rightarrow p_{t}$, $P$-a.s., where
$\bar{p}_{t}$ and $p_{t}$ are defined in Theorem \ref{MP-DPP-2}. Based on this
convergence property, we obtain that $D_{x}^{1,-}V(t,X_{t}^{\ast}%
)\subseteq \lbrack p_{t},\bar{p}_{t}],$ $P$-a.s., and $\{p_{t}\} \subseteq
D_{x}^{1,+}V(t,X_{t}^{\ast})$, $P$-a.s., if $p_{t}=\bar{p}_{t}$, $P$-a.s.

This paper is organized as follows. In Section 2, we recall some basic results
of MP and DPP under convex expectation dominated by $G$-expectation. In
Section 3, we first get the relationship between MP and DPP under the smooth
assumptions for the value function $V(\cdot,\cdot)$, and then obtain
$D_{x}^{1,-}V(t,X_{t}^{\ast})$ and $D_{x}^{1,+}V(t,X_{t}^{\ast})$ under each
fixed $P\in \mathcal{\tilde{P}}_{t}^{\ast}$. The proof of (\ref{nnn-e3-4}) is
given in appendix.

\section{Preliminaries}

In this section, we recall some basic results of MP and DPP under convex
expectation dominated by $G$-expectation. The readers may refer to \cite{MJL0,
LH} for more details.

Let $T>0$ be fixed and let $\Omega_{T}=C_{0}([0,T];\mathbb{R}^{d})$ be the
space of $\mathbb{R}^{d}$-valued continuous functions on $[0,T]$ with
$\omega_{0}=0$. For simplicity, we only consider the relationship between MP
and DPP under the case $d=1$, and the results are similar for $d>1$. The
canonical process $B_{t}(\omega):=\omega_{t}$, for $\omega \in \Omega_{T}$ and
$t\in \lbrack0,T]$. For each given $t\in \lbrack0,T]$, set%
\[
Lip(\Omega_{t}):=\{ \varphi(B_{t_{1}},B_{t_{2}},\ldots,B_{t_{N}}):N\geq
1,t_{1}\leq \cdots \leq t_{N}\leq t,\varphi \in C_{b.Lip}(\mathbb{R}^{N})\},
\]
where $C_{b.Lip}(\mathbb{R}^{N})$ denotes the space of bounded Lipschitz
functions on $\mathbb{R}^{N}$.

For each given $G(a):=\frac{1}{2}(\bar{\sigma}^{2}a^{+}-\underline{\sigma}%
^{2}a^{-})$ for $a\in \mathbb{R}$ with $\bar{\sigma}\geq \underline{\sigma}>0$,
Peng \cite{P07a, P08a} constructed the $G$-expectation sapce $(\Omega
_{T},Lip(\Omega_{T}),(\mathbb{\hat{E}}_{t})_{t\leq T})$, which is a consistent
sublinear expectation space. $\mathbb{\hat{E}}:=\mathbb{\hat{E}}_{0}$ is
called $G$-expectation, $\mathbb{\hat{E}}_{t}$ is called conditional
$G$-expectation and $B$ is called $G$-Brownian motion under $\mathbb{\hat{E}}%
$. Let $\tilde{G}:\mathbb{R}\rightarrow \mathbb{R}$ be a convex function
dominated by $G$ in the following sense:%
\[
\tilde{G}(0)=0,\text{ }\tilde{G}(a_{1})\leq \tilde{G}(a_{2})\text{ if }%
a_{1}\leq a_{2}\text{, }\tilde{G}(a_{3})-\tilde{G}(a_{4})\leq G(a_{3}%
-a_{4})\text{ for }a_{3},a_{4}\in \mathbb{R}\text{.}%
\]
Peng \cite{P2019} constructed a consistent convex $\tilde{G}$-expectation
sapce $(\Omega_{T},Lip(\Omega_{T}),(\mathbb{\tilde{E}}_{t})_{t\leq T})$, which
is dominated by $G$-expectation in the following sense:%
\[
\mathbb{\tilde{E}}_{t}[X]-\mathbb{\tilde{E}}_{t}[Y]\leq \mathbb{\hat{E}}%
_{t}[X-Y]\text{ for each }t\leq T\text{ and }X,Y\in Lip(\Omega_{T}).
\]

For each $t\in \lbrack0,T]$ and $p\geq1$, denote by $L_{G}^{p}(\Omega_{t})$ the
completion of $Lip(\Omega_{t})$ under the norm $||X||_{L_{G}^{p}%
}:=(\mathbb{\hat{E}}[|X|^{p}])^{1/p}$. $\mathbb{\hat{E}}_{t}$ and
$\mathbb{\tilde{E}}_{t}$ can be continuously extended to $L_{G}^{1}(\Omega
_{T})$ under the norm $||\cdot||_{L_{G}^{1}}$. Set%
\[
M_{G}^{0}(0,T)=\left \{  \eta_{t}=\sum_{i=0}^{N-1}\xi_{i}I_{[t_{i},t_{i+1}%
)}(t):\forall N\geq1\text{, }\forall0=t_{0}<\cdots<t_{N}=T\text{, }\forall
\xi_{i}\in Lip(\Omega_{t_{i}})\right \}  .
\]
For each $p\geq1$, denote by $M_{G}^{p}(0,T)$ the completion of $M_{G}%
^{0}(0,T)$ under the norm $||\eta||_{M_{G}^{p}}:=\left(  \mathbb{\hat{E}}%
[\int_{0}^{T}|\eta_{t}|^{p}dt]\right)  ^{1/p}$. For each $\eta \in M_{G}%
^{2}(0,T)$, the $G$-It\^{o} integral $\int_{0}^{T}\eta_{t}dB_{t}$ is well
defined. For any given positive integer $m$, set%
\[
M_{G}^{2}(0,T;\mathbb{R}^{m})=\{(\eta^{1},\ldots,\eta^{m})^{T}:\forall \eta
^{i}\in M_{G}^{2}(0,T),\text{ }i\leq m\}.
\]

The following theorem is the representation theorem of $G$-expectation (see
\cite{DHP11, HP09}).

\begin{theorem}
\label{re-1}There exists a unique convex and weakly compact set of probability
measures $\mathcal{P}$ on $(\Omega_{T},\mathcal{F}_{T})$ such that%
\[
\mathbb{\hat{E}}[X]=\sup_{P\in \mathcal{P}}E_{P}[X]\text{ for all }X\in
L_{G}^{1}(\Omega_{T}),
\]
where $\mathcal{F}_{t}=\sigma(B_{s}:s\leq t)$ for $t\leq T$.
\end{theorem}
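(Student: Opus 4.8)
The plan is to realize $\mathbb{\hat{E}}$ as an upper expectation in four steps: exhibit a concrete family of probability measures reproducing $\mathbb{\hat{E}}$ on cylinder functions; prove this family is tight, so that passing to its closed convex hull yields a convex weakly compact representing set; extend the identity from $Lip(\Omega_T)$ to $L_G^1(\Omega_T)$ by density; and deduce uniqueness from Hahn--Banach separation. For the first step I would use the finite-dimensional structure of $\mathbb{\hat{E}}$ coming from its construction: for a fixed grid $0\le t_1<\cdots<t_n\le T$, the value $\mathbb{\hat{E}}[\varphi(B_{t_1},B_{t_2}-B_{t_1},\ldots,B_{t_n}-B_{t_{n-1}})]$ is obtained by iterating the solution operator of the $G$-heat equation $\partial_s u=G(\partial^2_{xx}u)$, which admits the stochastic control representation as a supremum of $E[\varphi(\cdots)]$ over adapted $[\underline{\sigma},\bar{\sigma}]$-valued volatility processes $\theta$ on an auxiliary Wiener space. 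Pushing each $\theta$ forward to the law $P^{\theta}$ of $(\int_0^t\theta_s\,dW_s)_{t\le T}$ on $\Omega_T$ produces a family $\mathcal{P}_0$ of probability measures on $(\Omega_T,\mathcal{F}_T)$ with $\mathbb{\hat{E}}[X]=\sup_{P\in\mathcal{P}_0}E_P[X]$ for all $X\in Lip(\Omega_T)$.

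Next I would establish tightness. From the scaling of the $G$-normal distribution one has $\mathbb{\hat{E}}[|B_t-B_s|^4]\le C|t-s|^2$, hence $E_P[|B_t-B_s|^4]\le C|t-s|^2$ uniformly in $P\in\mathcal{P}_0$; since also $B_0=0$, Kolmogorov's tightness criterion shows $\mathcal{P}_0$ is tight on $\Omega_T=C_0([0,T];\mathbb{R})$, so by Prokhorov it is relatively weakly compact. Set $\mathcal{P}:=\overline{\mathrm{co}}\,\mathcal{P}_0$, the closed convex hull in the weak topology; this is convex and weakly compact. For $X\in C_b(\Omega_T)$ the functional $P\mapsto E_P[X]$ is affine and weakly continuous, so $\sup_{P\in\mathcal{P}}E_P[X]=\sup_{P\in\mathcal{P}_0}E_P[X]$ and the supremum is attained; in particular $\mathbb{\hat{E}}[X]=\max_{P\in\mathcal{P}}E_P[X]$ on $Lip(\Omega_T)$. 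To reach $L_G^1(\Omega_T)$, given $X\in L_G^1(\Omega_T)$ choose $\xi_n\in Lip(\Omega_T)$ with $\mathbb{\hat{E}}[|X-\xi_n|]\to0$; the estimates $|\mathbb{\hat{E}}[X]-\mathbb{\hat{E}}[\xi_n]|\le\mathbb{\hat{E}}[|X-\xi_n|]$ and $\sup_{P\in\mathcal{P}}|E_P[X]-E_P[\xi_n]|\le\sup_{P\in\mathcal{P}}E_P[|X-\xi_n|]\le\mathbb{\hat{E}}[|X-\xi_n|]$ let both sides of $\mathbb{\hat{E}}[\,\cdot\,]=\sup_{P\in\mathcal{P}}E_P[\,\cdot\,]$ pass to the limit, giving the stated identity on $L_G^1(\Omega_T)$.

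For uniqueness, suppose $\mathcal{P}'$ is another convex, weakly compact set representing $\mathbb{\hat{E}}$. Then $\mathcal{P}$ and $\mathcal{P}'$ have the same support functional $X\mapsto\sup_PE_P[X]$ on $Lip(\Omega_T)$; since $Lip(\Omega_T)$ is a point-separating subalgebra of $C_b(\Omega_T)$ containing the constants, a Stone--Weierstrass approximation on compact sets combined with the joint tightness of $\mathcal{P}\cup\mathcal{P}'$ upgrades this to equality of the support functionals on all of $C_b(\Omega_T)$. Hahn--Banach separation in the duality between finite signed measures and $C_b(\Omega_T)$ then forces $\mathcal{P}=\mathcal{P}'$, since two weakly compact convex sets of measures with the same support function coincide.

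The step I expect to be the main obstacle is the combination of Steps 1 and 2: converting the purely algebraic sublinearity of $\mathbb{\hat{E}}$ into genuinely countably additive probability measures on $\mathcal{F}_T$ while keeping uniform enough control over the family to secure weak compactness. The decisive tool is the uniform fourth-moment increment bound for $G$-Brownian motion feeding Kolmogorov's criterion; the care lies in arranging the finite-dimensional representing sets to be consistent so that they truly glue to measures on path space (equivalently, in an abstract Hahn--Banach route, in verifying the regularity of the resulting linear functionals). Everything else --- the convexification, the density extension, and the separation argument for uniqueness --- is then routine.
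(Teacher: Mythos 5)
This theorem is quoted in the paper from \cite{DHP11, HP09} and is not proved there; your argument reconstructs essentially the standard proof from those references (strong formulation via laws of $\int_0^\cdot \theta_s\,dW_s$ matched to the $G$-heat equation by dynamic programming, tightness from the uniform fourth-moment increment bound plus Kolmogorov's criterion, passage to the closed convex hull, density extension to $L_{G}^{1}(\Omega_{T})$, and Hahn--Banach separation for uniqueness), and it is sound. The only point you leave implicit is the identification of elements of the abstract completion $L_{G}^{1}(\Omega_{T})$ with (quasi-continuous) random variables on $(\Omega_{T},\mathcal{F}_{T})$ so that $E_{P}[X]$ is meaningful, which is exactly the capacity-theoretic content supplied in \cite{DHP11}.
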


For this $\mathcal{P}$, we define capacity%
\[
c(A):=\sup_{P\in \mathcal{P}}P(A)\text{ for }A\in \mathcal{F}_{T}.
\]
A set $A\in \mathcal{F}_{T}$ is polar if $c(A)=0$. A property holds
\textquotedblleft quasi-surely" (q.s. for short) if it holds outside a polar
set. In the following, we do not distinguish two random variables $X$ and $Y$
if $X=Y$ q.s.

The following theorem is the representation theorem of $\tilde{G}$-expectation
(see \cite{LH}).

\begin{theorem}
\label{re-2}Let $\mathcal{P}$ be given in Theorem \ref{re-1}. Then%
\[
\mathbb{\tilde{E}}[X]=\sup_{P\in \mathcal{P}}(E_{P}[X]-\alpha_{0}^{T}(P))\text{
for all }X\in L_{G}^{1}(\Omega_{T}),
\]
where
\[
\alpha_{0}^{T}(P)=\sup_{Y\in L_{G}^{1}(\Omega_{T})}(E_{P}[Y]-\mathbb{\tilde
{E}}[Y]).
\]

\end{theorem}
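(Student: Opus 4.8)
The plan is to read the representation off as an instance of Fenchel--Moreau biconjugation on the Banach space $L_G^1(\Omega_T)$, using the domination of $\mathbb{\tilde{E}}$ by $\mathbb{\hat{E}}$ to force the relevant dual functionals to be genuine expectations against measures in $\mathcal{P}$. The inequality $\sup_{P\in\mathcal{P}}\big(E_P[X]-\alpha_0^T(P)\big)\le\mathbb{\tilde{E}}[X]$ is immediate: for any $P$, taking $Y=X$ in the definition of $\alpha_0^T(P)$ gives $\alpha_0^T(P)\ge E_P[X]-\mathbb{\tilde{E}}[X]$, i.e. $E_P[X]-\alpha_0^T(P)\le\mathbb{\tilde{E}}[X]$; taking $Y=0$ also shows $\alpha_0^T(P)\ge0$, so the supremum is a well-posed quantity in $[-\infty,\mathbb{\tilde{E}}[X]]$. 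It remains to prove the reverse inequality.

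First I would record that $\mathbb{\tilde{E}}$ is a real-valued, convex, $1$-Lipschitz functional on $L_G^1(\Omega_T)$: putting $Y=0$ and then $X=0$ in the domination inequality gives $-\mathbb{\hat{E}}[-Z]\le\mathbb{\tilde{E}}[Z]\le\mathbb{\hat{E}}[Z]$, hence $|\mathbb{\tilde{E}}[X]-\mathbb{\tilde{E}}[Y]|\le\mathbb{\hat{E}}[|X-Y|]=\|X-Y\|_{L_G^1}$. Consequently, for the fixed $X\in L_G^1(\Omega_T)$ the subdifferential of $\mathbb{\tilde{E}}$ at $X$ is nonempty (a continuous convex functional on a Banach space is subdifferentiable at every point, by Hahn--Banach), so there is a bounded linear functional $\ell$ on $L_G^1(\Omega_T)$ with $\mathbb{\tilde{E}}[Y]\ge\mathbb{\tilde{E}}[X]+\ell(Y-X)$ for all $Y$; rearranged, $\ell(X)-\mathbb{\tilde{E}}[X]=\sup_Y\big(\ell(Y)-\mathbb{\tilde{E}}[Y]\big)$.

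\textbf{The hard part} is to show $\ell=E_{P_0}$ for some $P_0\in\mathcal{P}$. Applying the subgradient inequality at $Y=X+Z$ and using the domination yields $\ell(Z)\le\mathbb{\tilde{E}}[X+Z]-\mathbb{\tilde{E}}[X]\le\mathbb{\hat{E}}[Z]$ for every $Z\in L_G^1(\Omega_T)$; in particular $\ell$ is monotone, normalized ($\ell(1)=1$), and dominated by $\mathbb{\hat{E}}$. Restricting to bounded continuous functions $C_b(\Omega_T)\subseteq L_G^1(\Omega_T)$, the regularity of $\mathbb{\hat{E}}$ coming from the weak compactness of $\mathcal{P}$ (see \cite{DHP11, HP09}) gives $0\le\ell(\varphi_n)\le\mathbb{\hat{E}}[\varphi_n]\to0$ whenever $\varphi_n\downarrow0$, so $\ell$ is a Daniell integral on the Stone lattice $C_b(\Omega_T)$ and is represented by a countably additive Borel probability measure $P_0$ on the Polish space $\Omega_T$; since $E_{P_0}\le\mathbb{\hat{E}}$ on $C_b(\Omega_T)$, the characterization of $\mathcal{P}$ in \cite{DHP11} places $P_0\in\mathcal{P}$. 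Finally $\ell$ and $E_{P_0}$ are $\|\cdot\|_{L_G^1}$-continuous and agree on $Lip(\Omega_T)$, which is dense in $L_G^1(\Omega_T)$, hence $\ell=E_{P_0}$ throughout. Combining the displayed identity of the previous paragraph with the definition of $\alpha_0^T$, $\mathbb{\tilde{E}}[X]=E_{P_0}[X]-\alpha_0^T(P_0)\le\sup_{P\in\mathcal{P}}\big(E_P[X]-\alpha_0^T(P)\big)$, which closes the argument. (An alternative to the subgradient step is to separate $(X,\mathbb{\tilde{E}}[X]-\varepsilon)$ from the closed convex epigraph of $\mathbb{\tilde{E}}$ in $L_G^1(\Omega_T)\times\mathbb{R}$ and run the same identification on the separating functional; the genuinely delicate ingredient either way is upgrading an abstract dual element to a probability measure lying inside the prescribed weakly compact set $\mathcal{P}$.)
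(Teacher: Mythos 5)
You cannot actually be compared against the paper here: Theorem \ref{re-2} is stated without proof and is simply recalled from \cite{LH}, so the only thing to assess is your argument on its own merits. On that score your convex-duality route is sound. The easy inequality is handled correctly; $\mathbb{\tilde{E}}$ is indeed a real-valued, convex, $1$-Lipschitz functional on $L_{G}^{1}(\Omega_{T})$ (using $\mathbb{\tilde{E}}[0]=0$ and domination), so a subgradient $\ell$ at $X$ exists; the subgradient inequality plus domination gives $\ell\leq\mathbb{\hat{E}}$, hence positivity and $\ell(1)=1$; the Daniell--Stone step on $C_{b}(\Omega_{T})$ is legitimate because tightness of the weakly compact set $\mathcal{P}$ forces $\mathbb{\hat{E}}[\varphi_{n}]\downarrow0$ for $\varphi_{n}\downarrow0$ in $C_{b}(\Omega_{T})$; and the extension of the identity $\ell=E_{P_{0}}$ from $C_{b}(\Omega_{T})\supseteq Lip(\Omega_{T})$ to all of $L_{G}^{1}(\Omega_{T})$ by density and $\|\cdot\|_{L_{G}^{1}}$-continuity is fine. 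Note that your argument even gives attainment of the supremum at $P_{0}$, which is stronger than the stated theorem.

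The one step you should not leave as a citation-wave is $P_{0}\in\mathcal{P}$. Knowing $E_{P_{0}}\leq\mathbb{\hat{E}}$ on $C_{b}(\Omega_{T})$ does not by itself put $P_{0}$ into a prescribed representing set, and \cite{DHP11} does not hand you this in the form you use it; what makes it true is the uniqueness clause in Theorem \ref{re-1}. Concretely: the maximal set $\mathcal{P}_{\max}=\{Q:E_{Q}[\varphi]\leq\mathbb{\hat{E}}[\varphi]\ \text{for all }\varphi\in C_{b}(\Omega_{T})\}$ is convex and weakly closed, it represents $\mathbb{\hat{E}}$ (it contains $\mathcal{P}$ and every member is dominated), and it is tight because tightness is inherited from $\mathcal{P}$: for compact $K$ with $c(K^{c})\leq\varepsilon$ one has $Q(K^{c})=\sup \{E_{Q}[\varphi]:\varphi \in C_{b}(\Omega_{T}),\,0\leq \varphi \leq I_{K^{c}}\} \leq c(K^{c})\leq \varepsilon$. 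Hence $\mathcal{P}_{\max}$ is a convex weakly compact representing set, and since two distinct convex weakly compact sets cannot share the same support function on $C_{b}(\Omega_{T})$ (Hahn--Banach separation), the uniqueness in Theorem \ref{re-1} forces $\mathcal{P}=\mathcal{P}_{\max}$, which is exactly what places your $P_{0}$ in $\mathcal{P}$. With that short argument supplied, your proof is complete; it is presumably close in spirit to the duality argument in \cite{LH}, but that cannot be checked from this paper.
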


For simplicity of presentation, we only consider the relationship between MP
and DPP for the following $1$-dimensional forward-backward control system
under $\tilde{G}$-expectation, and the results are similar for other cases.%
\begin{equation}
\left \{
\begin{array}
[c]{l}%
dX_{t}^{u}=h(t,X_{t}^{u},u_{t})d\langle B\rangle_{t}+\sigma(t,X_{t}^{u}%
,u_{t})dB_{t},\text{ }X_{0}^{u}=x_{0}\in \mathbb{R},\\
Y_{t}^{u}=\mathbb{\tilde{E}}_{t}\left[  \Phi(X_{T}^{u})+\int_{t}^{T}%
g(s,X_{s}^{u},Y_{s}^{u},u_{s})d\langle B\rangle_{s}\right]  ,
\end{array}
\right.  \label{nn-e2-1}%
\end{equation}
where the control domain $U\subset \mathbb{R}^{m}$ is a given nonempty convex
and compact set, the set of admissible controls is denoted by
\[
\mathcal{U}[0,T]=M_{G}^{2}(0,T;U)=\{u\in M_{G}^{2}(0,T;\mathbb{R}^{m}%
):u_{t}\in U\text{ for }t\in \lbrack0,T]\},
\]
$\langle B\rangle$ is the quadratic variation of $B$, $h$, $\sigma
:[0,T]\times \mathbb{R}\times U\rightarrow \mathbb{R}$, $\Phi:\mathbb{R}%
\rightarrow \mathbb{R}$ and $g:[0,T]\times \mathbb{R}^{2}\times U\rightarrow
\mathbb{R}$ are continuous functions satisfying the following assumptions:

\begin{description}
\item[(A1)] The derivatives of $h$, $\sigma$, $\Phi$, $g$ in $(x,y,v)$ are
continuous in $(t,x,y,v)$.

\item[(A2)] There exists a constant $L>0$ satisfying%
\[%
\begin{array}
[c]{l}%
|h_{x}(t,x,v)|+|h_{v}(t,x,v)|+|\sigma_{x}(t,x,v)|+|\sigma_{v}(t,x,v)|+|g_{y}%
(t,x,y,v)|\leq L,\\
|g_{x}(t,x,y,v)|+|g_{v}(t,x,y,v)|+|\Phi_{x}(x)|\leq L(1+|x|+|v|).
\end{array}
\]

\end{description}

The cost functional is defined by $J(u):=Y_{0}^{u}$, and the optimal control
problem is to minimize $J(u)$ over $\mathcal{U}[0,T]$. If
\begin{equation}
J(u^{\ast})=\min_{u\in \mathcal{U}[0,T]}J(u), \label{nn-e2-2}%
\end{equation}
then $u^{\ast}$ is called an optimal control. The solution of (\ref{nn-e2-1})
corresponding to $u^{\ast}$ is denoted by $(X^{\ast},Y^{\ast})$. Set%
\begin{equation}
\mathcal{P}^{\ast}=\{P\in \mathcal{P}:E_{P}[\xi^{\ast}]-\alpha_{0}%
^{T}(P)=\mathbb{\tilde{E}}[\xi^{\ast}]\}, \label{nn-e2-3}%
\end{equation}
where $\xi^{\ast}=\Phi(X_{T}^{\ast})+\int_{0}^{T}g(s,X_{s}^{\ast},Y_{s}^{\ast
},u_{s}^{\ast})d\langle B\rangle_{s}$. For each given $P^{\ast}\in
\mathcal{P}^{\ast}$, the adjoint equation%
\begin{equation}
\left \{
\begin{array}
[c]{rl}%
dp_{t}= & -\{[h_{x}(t)+g_{y}(t)]p_{t}+\sigma_{x}(t)q_{t}+g_{x}(t)\}d\langle
B\rangle_{t}+q_{t}dB_{t}+dN_{t},\\
p_{T}= & \Phi_{x}(X_{T}^{\ast}),\text{ }t\in \lbrack0,T],
\end{array}
\right.  \label{nn-e2-5}%
\end{equation}
has a unique square integrable solution $(p,q,N)$ under $P^{\ast}$ (see
\cite{EH}), where $N$ is a martingale orthogonal to $B$ with $N_{0}=0$,
$h_{x}(t)=h_{x}(t,X_{t}^{\ast},u_{t}^{\ast})\,$, similar for $g_{y}(t)$,
$\sigma_{x}(t)$, $g_{x}(t)$ and other cases. Define the Hamiltonian
$H:[0,T]\times \mathbb{R}\times \mathbb{R}\times U\times \mathbb{R}%
\times \mathbb{R}\rightarrow \mathbb{R}$ as follows:%
\begin{equation}
H(t,x,y,v,p,q)=h(t,x,v)p+\sigma(t,x,v)q+g(t,x,y,v). \label{nn-e2-4}%
\end{equation}
The following MP is obtained in \cite{LH}.

\begin{theorem}
\label{nn-MP}Let assumptions (A1)-(A2) hold. If $u^{\ast}$ is an optimal
control for control problem (\ref{nn-e2-2}), then there exists a $P^{\ast}%
\in \mathcal{P}^{\ast}$ such that%
\[
H_{v}(t,X_{t}^{\ast},Y_{t}^{\ast},u_{t}^{\ast},p_{t},q_{t})(v-u_{t}^{\ast
})\geq0,\text{ }\forall v\in U,\text{ a.e., }P^{\ast}\text{-a.s.,}%
\]
where $\mathcal{P}^{\ast}$ is defined in (\ref{nn-e2-3}) and $(p,q,N)$ is the
solution of the adjoint equation (\ref{nn-e2-5}) under $P^{\ast}$.
\end{theorem}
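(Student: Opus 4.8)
The plan is to use a Pontryagin-type \emph{convex} variation of $u^{\ast}$ (natural since $U$ is convex and compact) together with the forward--backward duality relation built from the adjoint equation (\ref{nn-e2-5}); the novelty, forced by the non-linearity of $\mathbb{\tilde E}$, is that the first-order analysis of the cost cannot be carried out under a fixed probability but must go through the representation $\mathbb{\tilde E}=\sup_{P\in\mathcal{P}}(E_{P}[\cdot]-\alpha_{0}^{T}(P))$ of Theorem \ref{re-2}, passing in the limit $\theta\downarrow0$ to a representing measure in $\mathcal{P}^{\ast}$.

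\textbf{Step 1.} Fix $v\in\mathcal{U}[0,T]$, set $u^{\theta}=u^{\ast}+\theta(v-u^{\ast})\in\mathcal{U}[0,T]$ for $\theta\in(0,1)$, write $\Delta u=v-u^{\ast}$, and let $(X^{\theta},Y^{\theta})$ solve (\ref{nn-e2-1}). Standard a priori estimates for $G$-SDEs, performed under $\mathbb{\hat E}$ (hence uniformly over $P\in\mathcal{P}$) and using (A1)--(A2), give $X^{\theta}=X^{\ast}+\theta X^{1}+o(\theta)$ in $M_{G}^{2}$ with
\begin{equation*}
dX_{t}^{1}=\big(h_{x}(t)X_{t}^{1}+h_{v}(t)\Delta u_{t}\big)\,d\langle B\rangle_{t}+\big(\sigma_{x}(t)X_{t}^{1}+\sigma_{v}(t)\Delta u_{t}\big)\,dB_{t},\qquad X_{0}^{1}=0,
\end{equation*}
and the domination $\mathbb{\tilde E}_{t}[X]-\mathbb{\tilde E}_{t}[X']\le\mathbb{\hat E}_{t}[X-X']$ together with Gronwall gives $\mathbb{\hat E}[\sup_{t\le T}|Y_{t}^{\theta}-Y_{t}^{\ast}|^{2}]=O(\theta^{2})$.

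\textbf{Step 2 (the core).} Write $\xi^{\theta}=\Phi(X_{T}^{\theta})+\int_{0}^{T}g(s,X_{s}^{\theta},Y_{s}^{\theta},u_{s}^{\theta})\,d\langle B\rangle_{s}$, so that $Y_{0}^{\theta}=\mathbb{\tilde E}[\xi^{\theta}]$ and $Y_{0}^{\ast}=\mathbb{\tilde E}[\xi^{\ast}]$; since this is not a BSDE under a reference probability, comparison is unavailable and I use Theorem \ref{re-2} instead. Choose $P^{\theta}\in\mathcal{P}$ attaining $\mathbb{\tilde E}[\xi^{\theta}]=E_{P^{\theta}}[\xi^{\theta}]-\alpha_{0}^{T}(P^{\theta})$ (the supremum is attained because $\mathcal{P}$ is weakly compact by Theorem \ref{re-1}, $P\mapsto E_{P}[\cdot]$ is weakly continuous on $\mathcal{P}$ for $L_{G}^{1}$-random variables, and $\alpha_{0}^{T}$ is weakly lower semicontinuous, being a supremum of weakly continuous affine maps). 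Then for every $P\in\mathcal{P}^{\ast}$ (nonempty by the same attainment argument for $\xi^{\ast}$),
\begin{equation*}
E_{P}[\xi^{\theta}-\xi^{\ast}]\ \le\ Y_{0}^{\theta}-Y_{0}^{\ast}\ \le\ E_{P^{\theta}}[\xi^{\theta}-\xi^{\ast}],
\end{equation*}
the left inequality from $Y_{0}^{\theta}\ge E_{P}[\xi^{\theta}]-\alpha_{0}^{T}(P)$ and $Y_{0}^{\ast}=E_{P}[\xi^{\ast}]-\alpha_{0}^{T}(P)$, the right one from $Y_{0}^{\theta}=E_{P^{\theta}}[\xi^{\theta}]-\alpha_{0}^{T}(P^{\theta})$ and $Y_{0}^{\ast}\ge E_{P^{\theta}}[\xi^{\ast}]-\alpha_{0}^{T}(P^{\theta})$. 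Along $\theta_{n}\downarrow0$ extract $P^{\theta_{n}}\rightharpoonup P^{\ast}$; because $\xi^{\theta_{n}}\to\xi^{\ast}$ in $L_{G}^{1}$ and $\alpha_{0}^{T}$ is weakly lower semicontinuous, passing to the limit in $\mathbb{\tilde E}[\xi^{\theta_{n}}]=E_{P^{\theta_{n}}}[\xi^{\theta_{n}}]-\alpha_{0}^{T}(P^{\theta_{n}})$ forces $E_{P^{\ast}}[\xi^{\ast}]-\alpha_{0}^{T}(P^{\ast})\ge\mathbb{\tilde E}[\xi^{\ast}]$, i.e.\ $P^{\ast}\in\mathcal{P}^{\ast}$. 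Let $(Y^{1},Z^{1},\tilde N^{1})$ solve, \emph{under this $P^{\ast}$}, the linear BSDE with driver $g_{x}(t)X_{t}^{1}+g_{y}(t)Y_{t}^{1}+g_{v}(t)\Delta u_{t}$ and terminal value $\Phi_{x}(X_{T}^{\ast})X_{T}^{1}$, and set $\xi^{1}=\Phi_{x}(X_{T}^{\ast})X_{T}^{1}+\int_{0}^{T}(g_{x}(s)X_{s}^{1}+g_{y}(s)Y_{s}^{1}+g_{v}(s)\Delta u_{s})\,d\langle B\rangle_{s}$, so that $E_{P^{\ast}}[\xi^{1}]=Y_{0}^{1}$. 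Via a careful estimate resting on the conditional representation of $\mathbb{\tilde E}_{t}$ from \cite{LH} together with (A1)--(A2) and Step 1, one shows $\xi^{\theta}-\xi^{\ast}=\theta\xi^{1}+\rho^{\theta}$ with $\mathbb{\hat E}[|\rho^{\theta}|]=o(\theta)$ along the subsequence; dividing the sandwich by $\theta_{n}$ and using $E_{P^{\theta_{n}}}[\xi^{1}]\to E_{P^{\ast}}[\xi^{1}]$ (weak convergence, $\xi^{1}\in L_{G}^{2}$) gives $\theta_{n}^{-1}(Y_{0}^{\theta_{n}}-Y_{0}^{\ast})\to Y_{0}^{1}$, whence $Y_{0}^{1}\ge0$ by optimality $Y_{0}^{\theta_{n}}\ge Y_{0}^{\ast}$. (To obtain a single $P^{\ast}$ serving all directions at once one runs this argument along a countable family of variations and extracts diagonally, using continuity in $v$.)

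\textbf{Step 3 (duality and localization).} Let $(p,q,N)$ be the square-integrable solution under $P^{\ast}$ of (\ref{nn-e2-5}) (\cite{EH}). Applying It\^o's formula under $P^{\ast}$ to $p_{t}X_{t}^{1}$, taking $E_{P^{\ast}}$ (the $B$- and $N$-martingale parts vanish, $N$ being orthogonal to $B$ and to the finite-variation integrators), substituting the resulting identity for $E_{P^{\ast}}[\Phi_{x}(X_{T}^{\ast})X_{T}^{1}]$ into $Y_{0}^{1}=E_{P^{\ast}}[\xi^{1}]$, and observing that $\delta_{t}:=Y_{t}^{1}-p_{t}X_{t}^{1}$ solves the linear BSDE with driver $g_{y}(t)\delta_{t}+H_{v}(t,X_{t}^{\ast},Y_{t}^{\ast},u_{t}^{\ast},p_{t},q_{t})\Delta u_{t}$ and zero terminal value, one arrives at
\begin{equation*}
Y_{0}^{1}=E_{P^{\ast}}\Big[\int_{0}^{T}\Gamma_{t}\,H_{v}\big(t,X_{t}^{\ast},Y_{t}^{\ast},u_{t}^{\ast},p_{t},q_{t}\big)(v_{t}-u_{t}^{\ast})\,d\langle B\rangle_{t}\Big],\qquad\Gamma_{t}:=\exp\Big(\int_{0}^{t}g_{y}(s)\,d\langle B\rangle_{s}\Big)>0.
\end{equation*}
Hence $E_{P^{\ast}}\big[\int_{0}^{T}\Gamma_{t}H_{v}(t)(v_{t}-u_{t}^{\ast})\,d\langle B\rangle_{t}\big]\ge0$ for all $v\in\mathcal{U}[0,T]$; since $\Gamma_{t}>0$ and $d\langle B\rangle_{t}$ is $P^{\ast}$-a.s.\ absolutely continuous in $t$ with density in $[\underline{\sigma}^{2},\bar{\sigma}^{2}]$, choosing $v_{t}=u_{t}^{\ast}+\mathbf{1}_{A}\mathbf{1}_{[s,s+\varepsilon)}(t)(w-u_{t}^{\ast})$ for $A\in\mathcal{F}_{s}$, $w\in U$ and letting $\varepsilon\downarrow0$ and $A$ vary, the usual localization upgrades this to $H_{v}(t,X_{t}^{\ast},Y_{t}^{\ast},u_{t}^{\ast},p_{t},q_{t})(w-u_{t}^{\ast})\ge0$ for all $w\in U$, a.e., $P^{\ast}$-a.s.

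\textbf{Main obstacle.} The hard step is Step 2: since (\ref{nn-e1-4}) is not a BSDE under a reference probability, the first-order expansion of $Y_{0}^{\theta}$ cannot come from BSDE comparison or a priori estimates --- it must be squeezed out of the variational representation of $\mathbb{\tilde E}$ and its conditional form, and one has to produce a single $P^{\ast}$ that simultaneously lies in $\mathcal{P}^{\ast}$, governs the limit of $\theta^{-1}(Y_{0}^{\theta}-Y_{0}^{\ast})$, and carries the adjoint equation (\ref{nn-e2-5}). This is precisely what forces the combination of weak compactness of $\mathcal{P}$ with $\mathbb{\hat E}$-based control, uniform over $P\in\mathcal{P}$, of every $o(\theta)$-remainder; once it is in place, Step 3 is the classical forward--backward duality computation.
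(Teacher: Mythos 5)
You should first note that the paper you are writing into does not prove Theorem \ref{nn-MP} at all: it is recalled verbatim from \cite{LH}, and the only traces of the underlying technique visible here are the tools this paper borrows from \cite{LH} (the representation (\ref{nn-e3-9}), the Theorem 4.4-type discounting estimates reproduced in the appendix, and Steps 2--3 of the proof of Theorem \ref{MP-DPP-2}). Your overall architecture --- convex variation, representation of $\mathbb{\tilde{E}}$ over the weakly compact $\mathcal{P}$, extraction of $P^{\ast}\in\mathcal{P}^{\ast}$ by weak convergence, then forward--backward duality with (\ref{nn-e2-5}) and localization --- is indeed the right skeleton. But your Step 2 has a genuine gap. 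You define the first-order term $\xi^{1}$ through $(Y^{1},Z^{1},\tilde{N}^{1})$, a linear BSDE solved \emph{under the limiting measure} $P^{\ast}$ (El Karoui--Huang, with a martingale orthogonal to $B$), so $\xi^{1}$ is only a $P^{\ast}$-a.s.\ defined random variable and has no reason to belong to $L_{G}^{1}(\Omega_{T})$. Consequently the claimed remainder bound $\mathbb{\hat{E}}[|\rho^{\theta}|]=o(\theta)$ is not even well posed (and is not provable: the difference $Y^{\theta}-Y^{\ast}$ of two $\tilde{G}$-BSDE solutions does not linearize to a $P^{\ast}$-BSDE solution uniformly over $\mathcal{P}$), and the convergence $E_{P^{\theta_{n}}}[\xi^{1}]\rightarrow E_{P^{\ast}}[\xi^{1}]$ cannot be extracted from weak convergence of $P^{\theta_{n}}$, which only transports integrals of quasi-continuous/$L_{G}^{1}$ integrands. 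There is also a circularity: $P^{\ast}$ is produced as a weak limit, yet the expansion whose remainder you need uniformly in $\theta$ is formulated in terms of an object that exists only after $P^{\ast}$ is fixed.

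The way this is actually handled (and the way this paper itself operates in Steps 2--3 of Theorem \ref{MP-DPP-2} and in the appendix) is to \emph{never expand $Y^{\theta}-Y^{\ast}$}: one writes the conditional representation (\ref{nn-e3-9}) under each candidate measure $Q$ attaining it for $\xi^{\theta}$ (resp.\ a generic $Q$ for $\xi^{\ast}$), and absorbs the $g_{y}$-dependence pathwise through the discount factor $\Lambda_{s}=\exp\left(\int_{0}^{s}g_{y}(r)d\langle B\rangle_{r}\right)$, so that the resulting inequality for $Y_{0}^{\theta}-Y_{0}^{\ast}$ involves only $\Phi_{x}$, $g_{x}$, $g_{v}$, $h_{x}$, $h_{v}$, $\sigma_{x}$, $\sigma_{v}$ evaluated along the forward paths, with remainders controlled by $\mathbb{\hat{E}}$-moments of $X^{\theta}-X^{\ast}$ alone (these are uniform over $\mathcal{P}$). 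The duality with the adjoint equation and the weak-compactness passage to a single $P^{\ast}\in\mathcal{P}^{\ast}$ then go through essentially as you outline, the $P^{\ast}$-linearization in $Y$ appearing only after the measure is fixed. A secondary, smaller issue is your localization in Step 3: controls of the form $u^{\ast}+\mathbf{1}_{A}\mathbf{1}_{[s,s+\varepsilon)}(w-u^{\ast})$ with arbitrary $A\in\mathcal{F}_{s}$ need not lie in $\mathcal{U}[0,T]=M_{G}^{2}(0,T;U)$, so in the $G$-framework the passage from the integrated variational inequality to the pointwise one must be done with quasi-continuous approximations (or directly under $P^{\ast}$ with a measurable-selection argument), not with indicator functions of arbitrary measurable sets.
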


In order to give the DPP of control system (\ref{nn-e2-1}), we define the
value function: for each $(t,x)\in \lbrack0,T]\times \mathbb{R}$,%
\begin{equation}
V(t,x):=\underset{u\in M_{G}^{2}(t,T;U)}{ess\inf}Y_{t}^{t,x,u},
\label{nn-e2-6}%
\end{equation}
where $ess\inf$ is defined in the q.s. sense and $(X_{s}^{t,x,u},Y_{s}%
^{t,x,u})_{s\in \lbrack t,T]}$ satisfies%
\[
\left \{
\begin{array}
[c]{l}%
dX_{s}^{t,x,u}=h(s,X_{s}^{t,x,u},u_{s})d\langle B\rangle_{s}+\sigma
(s,X_{s}^{t,x,u},u_{s})dB_{s},\text{ }X_{t}^{t,x,u}=x,\\
Y_{s}^{t,x,u}=\mathbb{\tilde{E}}_{s}\left[  \Phi(X_{T}^{t,x,u})+\int_{s}%
^{T}g(r,X_{r}^{t,x,u},Y_{r}^{t,x,u},u_{r})d\langle B\rangle_{r}\right]
,\text{ }s\in \lbrack t,T].
\end{array}
\right.
\]

Hu et al. \cite{MJL0} proved that $V(\cdot,\cdot)$ satisfies the DPP and the
following HJB equation.

\begin{theorem}
\label{nn-DPP} Let assumptions (A1)-(A2) hold. Then $V(\cdot,\cdot)$ defined
in (\ref{nn-e2-6}) is the unique viscosity solution of the following HJB
equation:%
\begin{equation}
\partial_{t}V(t,x)+\inf_{v\in U}\tilde{G}(F(t,x,V(t,x),\partial_{x}%
V(t,x),\partial_{xx}^{2}V(t,x),v))=0,\text{ }V(T,x)=\Phi(x), \label{nn-e2-7}%
\end{equation}
where $F:[0,T]\times \mathbb{R}\times \mathbb{R}\times \mathbb{R}\times
\mathbb{R}\times U\rightarrow \mathbb{R}$ is defined by%
\[
F(t,x,a_{1},a_{2},a_{3},v)=\sigma^{2}(t,x,v)a_{3}+2h(t,x,v)a_{2}%
+2g(t,x,a_{1},v).
\]

\end{theorem}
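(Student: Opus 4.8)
The statement to be proved is Theorem~\ref{nn-DPP}, namely that the value function $V(\cdot,\cdot)$ is the unique viscosity solution of the HJB equation~(\ref{nn-e2-7}). My plan has three parts: (i) establish the dynamic programming principle for $V$, (ii) deduce the viscosity sub- and super-solution properties from the DPP, and (iii) prove uniqueness via a comparison principle. For part (i), the key is to exploit the time-consistency of the conditional $\tilde{G}$-expectation $\mathbb{\tilde{E}}_{t}$ and the flow property of the forward SDE $X^{t,x,u}$. Concretely, I would show that for $t\le \hat t\le T$,
\[
V(t,x)=\underset{u\in M_{G}^{2}(t,T;U)}{ess\inf}\,\mathbb{\tilde{E}}_{t}\Big[\int_{t}^{\hat t}g(r,X_{r}^{t,x,u},Y_{r}^{t,x,u},u_{r})\,d\langle B\rangle_{r}+V(\hat t,X_{\hat t}^{t,x,u})\Big].
\]
The ``$\ge$'' direction follows by plugging $V(\hat t,X_{\hat t}^{t,x,u})\le Y_{\hat t}^{\hat t,X_{\hat t}^{t,x,u},u'}$ for arbitrary $u'$ together with the tower property $\mathbb{\tilde{E}}_{t}\circ\mathbb{\tilde{E}}_{\hat t}=\mathbb{\tilde{E}}_{t}$ and pasting controls; the ``$\le$'' direction needs an approximation/measurable-selection argument to glue near-optimal controls on $[\hat t,T]$ to a fixed control on $[t,\hat t]$, using that the essential infimum is attained along a countable family. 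Continuity of $V$ in $(t,x)$ — uniform in $x$ via (A2) and standard $G$-SDE estimates — is needed throughout and should be recorded first.

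For part (ii), I would argue in the standard way. Fix $(t,x)$ and a smooth test function $\varphi$ with $\varphi\ge V$ (resp. $\le V$), $\varphi(t,x)=V(t,x)$. Using the DPP with a constant control $u_s\equiv v$ on a short interval $[t,t+\delta]$ and applying the $G$-It\^o formula to $s\mapsto\varphi(s,X_s^{t,x,v})$, one expands the increment of $\varphi$ against $d\langle B\rangle_s$; the $dB_s$ martingale part vanishes under $\mathbb{\tilde{E}}_t$ only up to the domination by $\hat{\mathbb{E}}_t$, so one controls it via the sublinear estimate $\mathbb{\tilde{E}}_t[X]-\mathbb{\tilde{E}}_t[Y]\le\hat{\mathbb{E}}_t[X-Y]$. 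Dividing by $\delta$ and sending $\delta\downarrow0$, the nonlinearity $\tilde{G}$ enters because the quadratic-variation term $d\langle B\rangle_s$ carries the uncertain volatility and $\mathbb{\tilde{E}}$ of a $d\langle B\rangle$-integral produces $\int\tilde{G}(\cdot)\,ds$ (this is exactly the structure reflected in (\ref{nn-e2-7}) with $F$ as given). Taking $\inf_{v\in U}$ at the sub-solution step and using a fixed near-optimal $v$ at the super-solution step yields the two inequalities. The terminal condition $V(T,x)=\Phi(x)$ is immediate from the definition.

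Part (iii), uniqueness, I would obtain by citing/adapting a comparison theorem for viscosity solutions of the fully nonlinear parabolic PDE~(\ref{nn-e2-7}); since $\tilde G$ is convex, monotone and dominated by $G$, the operator $(t,x,a_1,a_2,a_3)\mapsto \inf_{v\in U}\tilde G(F(t,x,a_1,a_2,a_3,v))$ is degenerate elliptic and Lipschitz in $(a_1,a_2)$ uniformly over the compact $U$, so the Crandall--Ishii lemma together with a standard doubling-of-variables argument (as in \cite{MJL0}) gives comparison, hence uniqueness in the class of continuous functions with appropriate growth. I expect the main obstacle to be part (i), specifically the ``$\le$'' inequality in the DPP: because $\mathbb{\tilde{E}}$ is only a convex (not linear) expectation and the family $\mathcal{P}$ is nondominated, the usual measurable-selection and pasting-of-controls machinery must be carried out carefully at the level of the sublinear capacity $c(\cdot)$ and in the $ess\inf$ sense — and the convex penalty term $\alpha_0^T(P)$ from Theorem~\ref{re-2} interacts with the pasting of controls in a way that has no classical analogue. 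Once the DPP is in hand, parts (ii) and (iii) are routine modulo the domination estimate.
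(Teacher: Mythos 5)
First, a point of comparison: the paper itself does not prove Theorem \ref{nn-DPP}; it is recalled from \cite{MJL0} (the DPP is their Theorem 4.10), so there is no in-paper argument to match against, and your three-part plan (DPP, viscosity property, comparison) does follow the outline of the cited proof. However, your sketch has a genuine flaw at its core: the DPP you write down is not the correct one for a recursive cost functional. On the right-hand side you keep $Y_{r}^{t,x,u}$ --- the solution of the BSDE on the whole horizon $[t,T]$ with terminal datum $\Phi(X_{T}^{t,x,u})$ --- inside the generator $g$, while replacing the terminal term by $V(\hat{t},X_{\hat{t}}^{t,x,u})$. The correct statement uses a backward semigroup: on $[t,\hat{t}]$ one solves the BSDE under $\mathbb{\tilde{E}}$ with terminal condition $V(\hat{t},X_{\hat{t}}^{t,x,u})$ and with $g$ evaluated at that auxiliary solution, and takes the essential infimum of its value at $t$. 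As written, the quantity inside your $ess\inf$ still depends on the control on all of $[\hat{t},T]$ through $Y^{t,x,u}$, so it is not a recursion in $V$ at all, and your step (ii) --- freezing a constant control on $[t,t+\delta]$ and expanding a test function --- cannot be run from it, since the generator would involve the control beyond $t+\delta$. With the backward-semigroup formulation the argument does close, because the auxiliary solution converges to $V(t,x)$ as $\delta\downarrow0$, which is exactly how the term $2g(t,x,V(t,x),v)$ enters $F$.

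Two further points are glossed over in a way that matters. In step (ii) you assert that $\mathbb{\tilde{E}}$ of a $d\langle B\rangle$-integral "produces $\int\tilde{G}(\cdot)\,ds$"; this holds only for deterministic (essentially constant) integrands, e.g. $\mathbb{\tilde{E}}_{t}[\tfrac{a}{2}(\langle B\rangle_{t+\delta}-\langle B\rangle_{t})+b\delta]=(\tilde{G}(a)+b)\delta$, and the viscosity argument requires a freezing-of-coefficients estimate of order $o(\delta)$ (uniform via (A1)-(A2)) to reduce to that case; as a general identity it is false. In step (i), the "$\leq$" direction cannot rely on classical measurable selection: since $\mathcal{P}$ is non-dominated one must argue quasi-surely, partitioning the state space, choosing near-optimal controls at grid points and pasting them using uniform continuity of $V$ in $x$ and the downward-directedness of the family defining the $ess\inf$; you gesture at this, but it is the technical heart of the cited proof. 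Finally, uniqueness needs a comparison theorem in a class of polynomially growing functions, since under (A2) the data, and hence $V$, may grow quadratically in $x$. So the route is the right one, but the DPP is misstated and the key analytic identifications are left unproven.
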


\section{Main results}

For simplicity, the constant $C$ will change from line to line in this
section. The following theorem is our first main result.

\begin{theorem}
\label{MP-DPP-1}Let assumptions (A1)-(A2) hold and let $u^{\ast}$ be an
optimal control for control problem (\ref{nn-e2-2}).

\begin{description}
\item[(i)] If $\partial_{t}V(\cdot,\cdot)$, $\partial_{xx}^{2}V(\cdot,\cdot)$
are continuous and $\partial_{xx}^{2}V(\cdot,\cdot)$ grows polynomially with
respect to $x$, then
\begin{equation}
\mathcal{\bar{P}}^{\ast}:=\{P\in \mathcal{P}:E_{P}[\bar{K}_{T}]-\alpha_{0}%
^{T}(P)=0\} \label{nn-e3-1}%
\end{equation}
is nonempty, and for each given $P^{\ast}\in \mathcal{\bar{P}}^{\ast}$, we have
$P^{\ast}\in \mathcal{P}^{\ast}$ and a.e. $t\in \lbrack0,T]$, $P^{\ast}$-a.s.%
\begin{equation}%
\begin{array}
[c]{l}%
Y_{t}^{\ast}=V(t,X_{t}^{\ast}),\\
-\partial_{t}V(t,X_{t}^{\ast})=\tilde{G}(\bar{F}(t,X_{t}^{\ast},u_{t}^{\ast
}))=\min_{v\in U}\tilde{G}(\bar{F}(t,X_{t}^{\ast},v)),
\end{array}
\label{nn-e3-2}%
\end{equation}

\item where $\mathcal{P}^{\ast}$ is defined in (\ref{nn-e2-3}), $\bar
{F}(t,x,v):=F(t,x,V(t,x),\partial_{x}V(t,x),\partial_{xx}^{2}V(t,x),v)$,%
\begin{equation}
\bar{K}_{t}=\frac{1}{2}\int_{0}^{t}\bar{F}(s,X_{s}^{\ast},u_{s}^{\ast
})d\langle B\rangle_{s}-\int_{0}^{t}\tilde{G}(\bar{F}(s,X_{s}^{\ast}%
,u_{s}^{\ast}))ds\text{, }t\in \lbrack0,T]. \label{nn-e3-3}%
\end{equation}

\item[(ii)] If $\tilde{G}^{\prime}(\cdot)$, $\partial_{tx}^{2}V(\cdot,\cdot)$,
$\partial_{xxx}^{3}V(\cdot,\cdot)$ are continuous and $\partial_{xx}%
^{2}V(\cdot,\cdot)$ grows polynomially with respect to $x$, then for each
given $P^{\ast}\in \mathcal{\bar{P}}^{\ast}$, we have a.e. $t\in \lbrack0,T]$,
$P^{\ast}$-a.s.,%
\begin{equation}
p_{t}=\partial_{x}V(t,X_{t}^{\ast}),\text{ }q_{t}=\partial_{xx}^{2}%
V(t,X_{t}^{\ast})\sigma(t,X_{t}^{\ast},u_{t}^{\ast}),\text{ }N_{t}=0,
\label{nn-e3-4}%
\end{equation}
and the maximum principle%
\begin{equation}
H_{v}(t,X_{t}^{\ast},Y_{t}^{\ast},u_{t}^{\ast},p_{t},q_{t})(v-u_{t}^{\ast
})\geq0,\text{ }\forall v\in U, \label{nn-e3-5}%
\end{equation}
where $(p,q,N)$ satisfies the adjoint equation (\ref{nn-e2-5}) under $P^{\ast
}$, the Hamiltonian $H$ is defined in (\ref{nn-e2-4}).
\end{description}
\end{theorem}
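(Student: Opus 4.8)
The plan is to obtain part (i) by combining the DPP/HJB equation of Theorem~\ref{nn-DPP} with the BSDE representation of $Y^{\ast}$ and a key estimate relating $\mathbb{\tilde{E}}$ to $E_{P^{\ast}}$. First I would apply It\^{o}'s formula to $V(t,X_{t}^{\ast})$: since $\partial_{t}V$ and $\partial_{xx}^{2}V$ are continuous (and the latter grows polynomially, so the stochastic integrals are well defined in $M_{G}^{2}$), one gets $dV(t,X_{t}^{\ast})=\partial_{x}V(t,X_{t}^{\ast})\sigma(t)dB_{t}+[\partial_{t}V+h\partial_{x}V+\tfrac12\sigma^{2}\partial_{xx}^{2}V](t,X_{t}^{\ast})d\langle B\rangle_{t}$. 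Using the HJB equation to substitute for $\partial_{t}V$ and recalling $\bar{F}(t,x,v)=\sigma^{2}\partial_{xx}^{2}V+2h\partial_{x}V+2g(t,x,V,v)$, the drift becomes $[\tfrac12\bar{F}(t,X_{t}^{\ast},u_{t}^{\ast})-\tilde{G}(\bar{F}(t,X_{t}^{\ast},u_{t}^{\ast}))-g(t,X_{t}^{\ast},V(t,X_{t}^{\ast}),u_{t}^{\ast})]d\langle B\rangle_{t}$ plus $-g\,d\langle B\rangle_t$ (I will bookkeep the $g$ term carefully so that $\tilde{V}_t:=V(t,X^{\ast}_t)$ solves a BSDE-type equation with generator $g(t,X^{\ast}_t,\tilde V_t,u^{\ast}_t)$ and an extra finite-variation term $-d\bar K_t$, with $\bar K$ as in (\ref{nn-e3-3})). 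Then $\tilde V_T=\Phi(X_T^\ast)$, so for any $P\in\mathcal{P}$ we have, after integrating against $P$ and using that $\int\cdot\,dB$ and $\int\cdot\,dN$ are martingales under an appropriate reference measure, an identity of the form $V(0,x_0)=E_P[\xi^\ast]+E_P[\bar K_T]$ up to the convexity correction; comparing with $Y_0^{\ast}=\mathbb{\tilde{E}}[\xi^{\ast}]=\sup_{P}(E_P[\xi^\ast]-\alpha_0^T(P))$ and with $V(0,x_0)\le Y_0^\ast$ (which holds by definition of the value function and optimality) forces $\mathcal{\bar P}^\ast\neq\emptyset$ and, for $P^\ast\in\mathcal{\bar P}^\ast$, both $E_{P^\ast}[\bar K_T]=0$ and $P^\ast\in\mathcal{P}^\ast$. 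The a.e.\ identity $Y_t^{\ast}=V(t,X_t^{\ast})$ then follows from the uniqueness of the BSDE under $\tilde{G}$-expectation (the estimate (\ref{nnn-e3-4}) alluded to in the introduction) once we know the driving data agree; and since $\bar K$ is the difference of an absolutely continuous increasing process ($\frac12\bar F$ integrated against $d\langle B\rangle$, which dominates $\tilde G(\bar F)ds$ by the domination of $\tilde G$ by $G$) and $E_{P^\ast}[\bar K_T]=0$, monotonicity forces $\bar K\equiv0$ $P^\ast$-a.s., giving the pointwise equality $-\partial_t V(t,X^\ast_t)=\tilde G(\bar F(t,X^\ast_t,u^\ast_t))$; the $\min_{v\in U}$ characterization is immediate from the HJB equation (\ref{nn-e2-7}) evaluated along $X^\ast$.

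For part (ii), with the additional smoothness ($\tilde G'$, $\partial^2_{tx}V$, $\partial^3_{xxx}V$ continuous) I would differentiate the identity established in (i) in the state variable. Concretely, apply It\^{o}'s formula to $\partial_x V(t,X_t^{\ast})$ using $\partial^2_{tx}V$ and $\partial^3_{xxx}V$; the drift of $d(\partial_x V(t,X^\ast_t))$ is $[\partial^2_{tx}V+h\,\partial^2_{xx}V+\tfrac12\sigma^2\partial^3_{xxx}V](t,X^\ast_t)d\langle B\rangle_t$, and $\partial^2_{tx}V$ is obtained by differentiating the HJB equation: $\partial^2_{tx}V = -\partial_a[\,\tilde G(\bar F)\,]$, where the derivative of $\tilde G(\bar F(t,x,u^\ast_t))$ in $x$ brings down $\tilde G'(\bar F)\cdot\partial_x\bar F$ (here the envelope/first-order condition from the $\inf_{v}$ in (\ref{nn-e2-7}) kills the term coming from the $v$-dependence, at least along the optimal $u^\ast$). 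Matching the resulting dynamics with the adjoint equation (\ref{nn-e2-5}) — whose generator is $-\{[h_x+g_y]p+\sigma_x q+g_x\}$ — and using the candidate $(p_t,q_t,N_t)=(\partial_x V(t,X^\ast_t),\ \partial^2_{xx}V(t,X^\ast_t)\sigma(t),\ 0)$, I expect the drift and martingale parts to coincide, and $p_T=\partial_x V(T,X_T^\ast)=\Phi_x(X_T^\ast)$ matches the terminal condition; uniqueness of the solution of (\ref{nn-e2-5}) under $P^\ast$ (cited from \cite{EH}) then yields (\ref{nn-e3-4}). Finally (\ref{nn-e3-5}) follows by plugging (\ref{nn-e3-4}) into Theorem~\ref{nn-MP}: $H_v(t,X^\ast_t,Y^\ast_t,u^\ast_t,p_t,q_t)(v-u^\ast_t)\ge0$ holds $P^\ast$-a.s., and since by (i) we also have $Y^\ast_t=V(t,X^\ast_t)$, the inequality is the stated one.

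The main obstacle I anticipate is the $G$-It\^{o}/$\tilde G$-bookkeeping in part (i): because (\ref{nn-e1-4}) is \emph{not} a BSDE under any single reference probability, one cannot simply invoke a classical comparison theorem, and the finite-variation term $\bar K$ mixes a $d\langle B\rangle_t$-integral with a $dt$-integral whose difference is controlled only through the domination $\tilde G(a_3)-\tilde G(a_4)\le G(a_3-a_4)$; making rigorous that $E_{P^\ast}[\bar K_T]=0$ together with the (quasi-sure) monotonicity of the $d\langle B\rangle$ part forces $\bar K\equiv 0$ $P^\ast$-a.s.\ is exactly where the estimate (\ref{nnn-e3-4}) — proved in the appendix — is needed, and that is the delicate point. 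In part (ii) the corresponding subtlety is the envelope-theorem step: one must justify that the $v$-derivative of $\tilde G(\bar F(t,x,v))$ evaluated at $v=u^\ast_t$ does not contribute to $\partial^2_{tx}V$ along the optimal path, which uses that $u^\ast$ attains the $\inf_{v\in U}$ in the HJB equation a.e.\ along $X^\ast$ (a consequence of part (i)); the polynomial-growth hypothesis on $\partial^2_{xx}V$ is used throughout to keep all the It\^{o} integrals in $M_G^2$ and to legitimize taking $E_{P^\ast}$ of them.
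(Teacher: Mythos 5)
Your overall skeleton for (i) (It\^{o} on $V(t,X_t^{\ast})$, comparison with $Y^{\ast}$, and using $Y_0^{\ast}=V(0,x_0)$ to force the slack terms to vanish) matches the paper, but two of your key mechanisms are wrong in the convex setting and the actual crux is left unproved. First, you claim that $\frac12\int\bar F\,d\langle B\rangle$ dominates $\int\tilde G(\bar F)\,ds$ so that $\bar K$ is monotone and $E_{P^{\ast}}[\bar K_T]=0$ forces $\bar K\equiv0$: this is a sublinear-case fact. For a convex $\tilde G$ dominated by $G$ one only has $-G(-a)\leq\tilde G(a)\leq G(a)$, so $\frac12\gamma_t a-\tilde G(a)$ has no sign, $\bar K$ is not monotone, and indeed the defining property of $\mathcal{\bar P}^{\ast}$ is $E_{P^{\ast}}[\bar K_T]=\alpha_0^T(P^{\ast})$, which is generally nonzero; the correct conclusion is not $\bar K\equiv0$ but $\Sigma_t:=\partial_tV(t,X_t^{\ast})+\tilde G(\bar F(t,X_t^{\ast},u_t^{\ast}))=0$ a.e., $P^{\ast}$-a.s. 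Second, your drift bookkeeping already substitutes $\partial_tV=-\tilde G(\bar F(t,X_t^{\ast},u_t^{\ast}))$, i.e.\ it assumes $u_t^{\ast}$ attains the infimum in the HJB equation, which is precisely part of what must be proved; the HJB only gives $\Sigma_t\geq0$. The paper separates the residual $\Sigma_t\,dt$ from $\bar K$, represents $V(t,X_t^{\ast})$ via Lemma 5.7 of \cite{MJL0}, uses the conditional representation under $P^{\ast}\in\mathcal{\bar P}^{\ast}$ (Propositions 3.3 and 3.7, Theorem 3.4 of \cite{LH}), and then proves the quantitative estimate (\ref{nnn-e3-4}) by a discretization handling the penalty $\alpha_t^T(P^{\ast})$ (appendix); since (\ref{nn-e1-4}) is not a BSDE under any single measure, this estimate is the whole content of (i), and your proposal explicitly defers it (``that is the delicate point'') rather than supplying it.

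In (ii) there are two further gaps. Your It\^{o} expansion of $\partial_xV(t,X_t^{\ast})$ has a genuine $dt$-drift $\partial^2_{tx}V\,dt$, while the adjoint equation (\ref{nn-e2-5}) has its drift against $d\langle B\rangle_t$; matching them is impossible without identifying the quadratic-variation density under $P^{\ast}$, namely $d\langle B\rangle_t=2\tilde G^{\prime}(\bar F(t,X_t^{\ast},u_t^{\ast}))\,dt$ (Proposition 5.1 of \cite{LH}), which is special to $P^{\ast}\in\mathcal{\bar P}^{\ast}$ and is nowhere in your argument (you wrote the $\partial^2_{tx}V$ term against $d\langle B\rangle_t$, which hides exactly this issue); also $\partial^2_{tx}V(t,X_t^{\ast})=-\tilde G^{\prime}(\bar F)\partial_x\bar F(t,X_t^{\ast},u_t^{\ast})$ should be obtained not by differentiating the HJB infimum (envelope regularity issues) but from the first-order condition at the interior minimum of the nonnegative map $x\mapsto\partial_tV(t,x)+\tilde G(\bar F(t,x,u_t^{\ast}))$, which vanishes at $x=X_t^{\ast}$ by (i). Finally, deducing (\ref{nn-e3-5}) ``by plugging (\ref{nn-e3-4}) into Theorem \ref{nn-MP}'' does not work: Theorem \ref{nn-MP} asserts the variational inequality under \emph{some} $P^{\ast}\in\mathcal P^{\ast}$, not under the arbitrarily given $P^{\ast}\in\mathcal{\bar P}^{\ast}$, and in this nondominated framework an a.s.\ statement under one measure does not transfer to another (possibly singular) one. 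The paper instead gets (\ref{nn-e3-5}) directly from the $v$-minimization in (\ref{nn-e3-2}): $\tilde G^{\prime}(\bar F)\,\partial_v\bar F(t,X_t^{\ast},u_t^{\ast})(v-u_t^{\ast})\geq0$ together with $2\tilde G^{\prime}\geq\underline{\sigma}^2>0$ and $\frac12\partial_v\bar F=H_v$ under the identification (\ref{nn-e3-4}).
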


\begin{proof}
(i) It follows from Proposition 3.3 in \cite{LH} that $\mathcal{\bar{P}}%
^{\ast}$ is nonempty. By applying the It\^{o} formula to $V(t,X_{t}^{\ast})$,
we get%
\begin{equation}
d\bar{Y}_{t}=\Sigma_{t}dt-g(t,X_{t}^{\ast},\bar{Y}_{t},u_{t}^{\ast})d\langle
B\rangle_{t}+\bar{Z}_{t}dB_{t}+d\bar{K}_{t},\text{ }\bar{Y}_{T}=\Phi
(X_{T}^{\ast}), \label{nnn-e3-1}%
\end{equation}
where $\bar{K}_{t}$ is defined in (\ref{nn-e3-3}), $\bar{Y}_{t}=V(t,X_{t}%
^{\ast})$, $\bar{Z}_{t}=\partial_{x}V(t,X_{t}^{\ast})\sigma(t,X_{t}^{\ast
},u_{t}^{\ast})$ and
\[
\Sigma_{t}=\partial_{t}V(t,X_{t}^{\ast})+\tilde{G}(\bar{F}(t,X_{t}^{\ast
},u_{t}^{\ast}))\geq0.
\]
Since $\partial_{xx}^{2}V(\cdot,\cdot)$ grows polynomially with respect to
$x$, one can easily check that $\bar{Y}_{t}\in L_{G}^{2}(\Omega_{t})$ and
$\bar{Z}\in M_{G}^{2}(0,T)$. It follows from Lemma 5.7 in \cite{MJL0} and
(\ref{nnn-e3-1}) that%
\begin{equation}
\bar{Y}_{t}=\mathbb{\tilde{E}}_{t}\left[  \Phi(X_{T}^{\ast})-\int_{t}%
^{T}\Sigma_{s}ds+\int_{t}^{T}g(s,X_{s}^{\ast},\bar{Y}_{s},u_{s}^{\ast
})d\langle B\rangle_{s}\right]  ,\text{ }t\in \lbrack0,T]. \label{nnn-e3-2}%
\end{equation}
For each given $P^{\ast}\in \mathcal{\bar{P}}^{\ast}$, by Proposition 3.7 in
\cite{LH}, we have for each $t\in \lbrack0,T]$, $P^{\ast}$-a.s.%
\begin{equation}
\bar{Y}_{t}=E_{P^{\ast}}\left[  \left.  \Phi(X_{T}^{\ast})-\int_{t}^{T}%
\Sigma_{s}ds+\int_{t}^{T}g(s,X_{s}^{\ast},\bar{Y}_{s},u_{s}^{\ast})d\langle
B\rangle_{s}\right \vert \mathcal{F}_{t}\right]  -\alpha_{t}^{T}(P^{\ast}),
\label{nnn-e3-3}%
\end{equation}
where for each $P\in \mathcal{P}$,%
\begin{equation}
\alpha_{t}^{T}(P):=\underset{\xi \in L_{G}^{1}(\Omega_{T})}{ess\sup}^{P}\left(
E_{P}[\xi|\mathcal{F}_{t}]-\mathbb{\tilde{E}}_{t}[\xi]\right)  ,\text{
}P\text{-a.s.} \label{nnn-e3-8}%
\end{equation}
By Theorem 3.4 in \cite{LH}, we have for each $t\in \lbrack0,T]$, $P^{\ast}%
$-a.s.%
\begin{equation}
Y_{t}^{\ast}\geq E_{P^{\ast}}\left[  \left.  \Phi(X_{T}^{\ast})+\int_{t}%
^{T}g(s,X_{s}^{\ast},Y_{s}^{\ast},u_{s}^{\ast})d\langle B\rangle
_{s}\right \vert \mathcal{F}_{t}\right]  -\alpha_{t}^{T}(P^{\ast}).
\label{n-nnn-e3-3}%
\end{equation}
Note that $\exp \left(  \int_{0}^{t}(\int_{0}^{1}g_{y}(s,X_{s}^{\ast},\bar
{Y}_{s}+\alpha(Y_{s}^{\ast}-\bar{Y}_{s}),u_{s}^{\ast})d\alpha)d\langle
B\rangle_{s}\right)  $, $t\in \lbrack0,T]$, has a positive lower bound
$C=\exp(-L\bar{\sigma}^{2}T)$. Then, by using the same method as in the proof
of Theorem 4.4 in \cite{LH} for (\ref{nnn-e3-3}) and (\ref{n-nnn-e3-3}), we
can deduce%
\begin{equation}
Y_{0}^{\ast}-\bar{Y}_{0}\geq C\left(  E_{P^{\ast}}\left[  \int_{0}^{T}%
\Sigma_{t}dt\right]  +\mathbb{\tilde{E}}[\xi^{\ast}]-E_{P^{\ast}}[\xi^{\ast
}]+\alpha_{0}^{T}(P^{\ast})\right)  , \label{nnn-e3-4}%
\end{equation}
where $\xi^{\ast}$ is defined in (\ref{nn-e2-3}). Since $\Sigma_{t}\geq0$,
$\mathbb{\tilde{E}}[\xi^{\ast}]-E_{P^{\ast}}[\xi^{\ast}]+\alpha_{0}%
^{T}(P^{\ast})\geq0$ and $Y_{0}^{\ast}=\bar{Y}_{0}=V(0,x_{0})$, we obtain that
$P^{\ast}\in \mathcal{P}^{\ast}$ and $\Sigma_{t}=0$, a.e. $t\in \lbrack0,T]$,
$P^{\ast}$-a.s., which implies (\ref{nn-e3-2}) by Theorem \ref{nn-DPP} and
(\ref{nnn-e3-3}).

(ii) For each given $P^{\ast}\in \mathcal{\bar{P}}^{\ast}$, by Proposition 5.1
in \cite{LH}, we have%
\begin{equation}
\gamma_{t}=2\tilde{G}^{\prime}(\bar{F}(t,X_{t}^{\ast},u_{t}^{\ast})),\text{
a.e. }t\in \lbrack0,T],\text{ }P^{\ast}\text{-a.s.,} \label{nnn-e3-5}%
\end{equation}
where $d\langle B\rangle_{t}=\gamma_{t}dt$. By Theorem \ref{nn-DPP} and
(\ref{nn-e3-2}), we know that $\partial_{t}V(t,x)+\tilde{G}(\bar{F}%
(t,x,u_{t}^{\ast}))\geq0$ and $\partial_{t}V(t,X_{t}^{\ast})+\tilde{G}(\bar
{F}(t,X_{t}^{\ast},u_{t}^{\ast}))=0$, a.e. $t\in \lbrack0,T]$, $P^{\ast}$-a.s.
Then we obtain a.e. $t\in \lbrack0,T]$, $P^{\ast}$-a.s.,%
\begin{equation}
\partial_{tx}^{2}V(t,X_{t}^{\ast})+\tilde{G}^{\prime}(\bar{F}(t,X_{t}^{\ast
},u_{t}^{\ast}))\partial_{x}\bar{F}(t,X_{t}^{\ast},u_{t}^{\ast})=0.
\label{nnn-e3-7}%
\end{equation}
By applying the It\^{o} formula to $\partial_{x}V(t,X_{t}^{\ast})$, we get%
\begin{equation}
d\tilde{Y}_{t}=\partial_{tx}^{2}V(t,X_{t}^{\ast})dt+[\partial_{xx}%
^{2}V(t,X_{t}^{\ast})h(t,X_{t}^{\ast},u_{t}^{\ast})+\frac{1}{2}\partial
_{xxx}^{3}V(t,X_{t}^{\ast})\sigma^{2}(t,X_{t}^{\ast},u_{t}^{\ast})]d\langle
B\rangle_{t}+\tilde{Z}_{t}dB_{t}, \label{nnn-e3-6}%
\end{equation}
where $\tilde{Y}_{t}=\partial_{x}V(t,X_{t}^{\ast})$ and $\tilde{Z}%
_{t}=\partial_{xx}^{2}V(t,X_{t}^{\ast})\sigma(t,X_{t}^{\ast},u_{t}^{\ast})$.
Note that $\partial_{xx}^{2}V(\cdot,\cdot)$ grows polynomially with respect to
$x$. Then one can verify that (\ref{nn-e3-4}) is the square integrable
solution of the adjoint equation (\ref{nn-e2-5}) under $P^{\ast}$ by
(\ref{nnn-e3-5})-(\ref{nnn-e3-6}). On the other hand, it follows from
(\ref{nn-e3-2}) that $\partial_{v}\tilde{G}(\bar{F}(t,X_{t}^{\ast
},v))|_{v=u_{t}^{\ast}}(v-u_{t}^{\ast})\geq0$, $\forall v\in U$, a.e.
$t\in \lbrack0,T]$, $P^{\ast}$-a.s., which implies (\ref{nn-e3-5}) by
$2\tilde{G}^{\prime}(\cdot)\geq \underline{\sigma}^{2}$.
\end{proof}

\begin{remark}
It is possible that $Y_{t}^{\ast}\not =V(t,X_{t}^{\ast})$ under some
$P\in \mathcal{P}^{\ast}$ (see Example 3 in \cite{L1} under the sublinear case).
\end{remark}

In order to study the relationship between MP and DPP in the viscosity sense,
we need the following propositions. For each given $P\in \mathcal{P}$ and
$t\in \lbrack0,T]$, set%
\begin{equation}
\mathcal{P}(t,P)=\{Q\in \mathcal{P}:Q(A)=P(A)\text{ for }A\in \mathcal{F}%
_{t}\}=\{Q\in \mathcal{P}:E_{Q}[\xi]=E_{P}[\xi]\text{ for }\xi \in
Lip(\Omega_{t})\}, \label{nn-e3-6}%
\end{equation}
we have the following representation theorem for $\mathbb{\tilde{E}}_{t}$ (see
Theorem 3.4 in \cite{LH}):%
\begin{equation}
\mathbb{\tilde{E}}_{t}[\xi]=\underset{Q\in \mathcal{P}(t,P)}{ess\sup}%
^{P}\left(  E_{Q}[\xi|\mathcal{F}_{t}]-\alpha_{t}^{T}(Q)\right)  ,\text{
}P\text{-a.s., for each }\xi \in L_{G}^{1}(\Omega_{T}), \label{nn-e3-9}%
\end{equation}
where $\alpha_{t}^{T}(Q)$ is defined in (\ref{nnn-e3-8}). For each fixed
$\xi \in$ $L_{G}^{1}(\Omega_{T})$, set%
\begin{equation}
\mathcal{P}_{\xi}^{\ast}(t,P)=\{Q\in \mathcal{P}(t,P):E_{Q}[\xi|\mathcal{F}%
_{t}]-\alpha_{t}^{T}(Q)=\mathbb{\tilde{E}}_{t}[\xi],\text{ }P\text{-a.s.}\}.
\label{nn-e3-7}%
\end{equation}

\begin{proposition}
\label{pro-1}Let $\{ \xi_{n}:n\geq1\} \subseteq L_{G}^{1}(\Omega_{T})$ satisfy
$\mathbb{\hat{E}}[|\xi_{n}-\xi|]\rightarrow0$ as $n\rightarrow \infty$ and let
$P\in \mathcal{P}$ be given.

\begin{description}
\item[(i)] For each fixed $t\in \lbrack0,T]$, $\mathcal{P}_{\xi}^{\ast}(t,P)$
is a nonempty convex and weakly compact set.

\item[(ii)] If $\{Q_{n}\in \mathcal{P}_{\xi_{n}}^{\ast}(t,P):n\geq1\}$
converges weakly to $Q$, then $Q\in \mathcal{P}_{\xi}^{\ast}(t,P)$.

\item[(iii)] If $Q\in \mathcal{P}_{\xi}^{\ast}(t,P)$, then $E_{Q}%
[\xi|\mathcal{F}_{s}]-\alpha_{s}^{T}(Q)=\mathbb{\tilde{E}}_{s}[\xi]$,
$Q$-a.s., for $s\in \lbrack t,T]$.
\end{description}
\end{proposition}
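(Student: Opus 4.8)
The plan is to treat (ii) as the heart of the matter; (i) will follow from it by soft convexity/compactness arguments, and (iii) is a separate forward-in-time consistency computation. For (ii), first note that $\mathcal{P}(t,P)=\bigcap_{\eta\in Lip(\Omega_{t})}\{R\in\mathcal{P}:E_{R}[\eta]=E_{P}[\eta]\}$ is an intersection of weakly closed sets, so the weak limit satisfies $Q\in\mathcal{P}(t,P)$, and the representation theorem (\ref{nn-e3-9}) already gives $E_{Q}[\xi|\mathcal{F}_{t}]-\alpha_{t}^{T}(Q)\leq\mathbb{\tilde{E}}_{t}[\xi]$, $P$-a.s. Since two $P$-integrable random variables that are $P$-a.s.\ ordered and have equal $P$-mean coincide $P$-a.s., it suffices to prove the reverse integrated inequality $E_{P}[E_{Q}[\xi|\mathcal{F}_{t}]-\alpha_{t}^{T}(Q)]\geq E_{P}[\mathbb{\tilde{E}}_{t}[\xi]]$, i.e.\ (using $Q=P$ on $\mathcal{F}_{t}$) $E_{Q}[\xi]-E_{P}[\alpha_{t}^{T}(Q)]\geq E_{P}[\mathbb{\tilde{E}}_{t}[\xi]]$. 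The key point is that $Q\mapsto E_{P}[\alpha_{t}^{T}(Q)]$ is weakly lower semicontinuous on $\mathcal{P}$: turning the essential supremum defining $\alpha_{t}^{T}(Q)$ into a genuine one (via the upward directedness of the penalizing family $\{E_{Q}[\zeta|\mathcal{F}_{t}]-\mathbb{\tilde{E}}_{t}[\zeta]:\zeta\in L_{G}^{1}(\Omega_{T})\}$, which uses locality of $\mathbb{\tilde{E}}_{t}$, together with $E_{P}[E_{Q}[\zeta|\mathcal{F}_{t}]]=E_{Q}[\zeta]$) one gets $E_{P}[\alpha_{t}^{T}(Q)]=\sup_{\zeta\in L_{G}^{1}(\Omega_{T})}\big(E_{Q}[\zeta]-E_{P}[\mathbb{\tilde{E}}_{t}[\zeta]]\big)$, a supremum of functionals $Q\mapsto E_{Q}[\zeta]-E_{P}[\mathbb{\tilde{E}}_{t}[\zeta]]$ that are weakly continuous on $\mathcal{P}$ (clear for $\zeta\in Lip(\Omega_{T})$, and extended to $L_{G}^{1}(\Omega_{T})$ by the uniform estimate $\sup_{R\in\mathcal{P}}E_{R}[|\zeta-\zeta^{(k)}|]\leq\mathbb{\hat{E}}[|\zeta-\zeta^{(k)}|]$). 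Finally, taking $E_{P}$ in the identity $\mathbb{\tilde{E}}_{t}[\xi_{n}]=E_{Q_{n}}[\xi_{n}|\mathcal{F}_{t}]-\alpha_{t}^{T}(Q_{n})$ (which holds $P$-a.s.\ by (\ref{nn-e3-7})) gives $E_{P}[\alpha_{t}^{T}(Q_{n})]=E_{Q_{n}}[\xi_{n}]-E_{P}[\mathbb{\tilde{E}}_{t}[\xi_{n}]]$; here $E_{P}[\mathbb{\tilde{E}}_{t}[\xi_{n}]]\to E_{P}[\mathbb{\tilde{E}}_{t}[\xi]]$ (domination $|\mathbb{\tilde{E}}_{t}[\xi_{n}]-\mathbb{\tilde{E}}_{t}[\xi]|\leq\mathbb{\hat{E}}_{t}[|\xi_{n}-\xi|]$ and $\mathbb{\hat{E}}[\mathbb{\hat{E}}_{t}[|\xi_{n}-\xi|]]=\mathbb{\hat{E}}[|\xi_{n}-\xi|]\to0$) and $E_{Q_{n}}[\xi_{n}]\to E_{Q}[\xi]$ (uniform $L_{G}^{1}$-continuity plus weak convergence), so $E_{P}[\alpha_{t}^{T}(Q_{n})]\to E_{Q}[\xi]-E_{P}[\mathbb{\tilde{E}}_{t}[\xi]]$, and lower semicontinuity yields $E_{P}[\alpha_{t}^{T}(Q)]\leq E_{Q}[\xi]-E_{P}[\mathbb{\tilde{E}}_{t}[\xi]]$, which is exactly the needed inequality. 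This proves (ii).

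For (i): nonemptiness follows by choosing, via the upward directedness of $\{E_{R}[\xi|\mathcal{F}_{t}]-\alpha_{t}^{T}(R):R\in\mathcal{P}(t,P)\}$ (pasting, under which $\mathcal{P}$ is stable), a sequence $Q_{n}\in\mathcal{P}(t,P)$ with $E_{Q_{n}}[\xi|\mathcal{F}_{t}]-\alpha_{t}^{T}(Q_{n})\uparrow\mathbb{\tilde{E}}_{t}[\xi]$ $P$-a.s.; by weak compactness of $\mathcal{P}$ a subsequence converges weakly, and (ii) applied with $\xi_{n}\equiv\xi$ places the limit in $\mathcal{P}_{\xi}^{\ast}(t,P)$. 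Convexity: on $\mathcal{P}(t,P)$ each $R\mapsto E_{R}[\zeta|\mathcal{F}_{t}]$ is affine (test against $\mathcal{F}_{t}$-measurable functions, all such $R$ agreeing with $P$ there), hence $\alpha_{t}^{T}$ is convex and $R\mapsto E_{R}[\xi|\mathcal{F}_{t}]-\alpha_{t}^{T}(R)$ is concave; being dominated by the ($R$-constant) $\mathbb{\tilde{E}}_{t}[\xi]$ and equal to it precisely on $\mathcal{P}_{\xi}^{\ast}(t,P)$, its maximizer set is the intersection of a level set of a concave map with the convex set $\mathcal{P}(t,P)$, hence convex. Weak compactness: by (ii), $\mathcal{P}_{\xi}^{\ast}(t,P)$ is weakly sequentially closed in the weakly compact (hence metrizable) $\mathcal{P}$, so it is weakly compact.

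For (iii): the bound $E_{Q}[\xi|\mathcal{F}_{s}]-\alpha_{s}^{T}(Q)\leq\mathbb{\tilde{E}}_{s}[\xi]$, $Q$-a.s., is just (\ref{nn-e3-9}) at time $s$ with base measure $Q\in\mathcal{P}$. For the reverse, set $\delta:=\mathbb{\tilde{E}}_{s}[\xi]-\big(E_{Q}[\xi|\mathcal{F}_{s}]-\alpha_{s}^{T}(Q)\big)\geq0$, an $\mathcal{F}_{s}$-measurable random variable, so it is enough to show $E_{Q}[\delta]\leq0$. From the hypothesis $E_{Q}[\xi|\mathcal{F}_{t}]-\alpha_{t}^{T}(Q)=\mathbb{\tilde{E}}_{t}[\xi]$ ($Q$-a.s.\ since $Q=P$ on $\mathcal{F}_{t}$) we get $E_{Q}[\alpha_{t}^{T}(Q)]=E_{Q}[\xi]-E_{Q}[\mathbb{\tilde{E}}_{t}[\xi]]$. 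Using the additivity (cocycle) identity $\alpha_{t}^{T}(Q)=\alpha_{t}^{s}(Q)+E_{Q}[\alpha_{s}^{T}(Q)|\mathcal{F}_{t}]$, where $\alpha_{t}^{s}(Q):=\underset{\zeta\in L_{G}^{1}(\Omega_{s})}{ess\sup}^{Q}\big(E_{Q}[\zeta|\mathcal{F}_{t}]-\mathbb{\tilde{E}}_{t}[\zeta]\big)$ (a consequence of the tower property $\mathbb{\tilde{E}}_{t}[\mathbb{\tilde{E}}_{s}[\cdot]]=\mathbb{\tilde{E}}_{t}[\cdot]$), together with the lower bound $\alpha_{t}^{s}(Q)\geq E_{Q}[\mathbb{\tilde{E}}_{s}[\xi]|\mathcal{F}_{t}]-\mathbb{\tilde{E}}_{t}[\mathbb{\tilde{E}}_{s}[\xi]]$ (take $\zeta=\mathbb{\tilde{E}}_{s}[\xi]\in L_{G}^{1}(\Omega_{s})$ in the essential supremum), taking $E_{Q}$ yields $E_{Q}[\alpha_{s}^{T}(Q)]=E_{Q}[\alpha_{t}^{T}(Q)]-E_{Q}[\alpha_{t}^{s}(Q)]\leq E_{Q}[\xi]-E_{Q}[\mathbb{\tilde{E}}_{s}[\xi]]$, whence $E_{Q}[\delta]=E_{Q}[\mathbb{\tilde{E}}_{s}[\xi]]-E_{Q}[\xi]+E_{Q}[\alpha_{s}^{T}(Q)]\leq0$. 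Hence $\delta=0$ $Q$-a.s., for every $s\in[t,T]$.

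The step I expect to be the main obstacle is the weak lower semicontinuity of $Q\mapsto E_{P}[\alpha_{t}^{T}(Q)]$ used in (ii): it forces one to replace the essential supremum in the definition of $\alpha_{t}^{T}(Q)$ by a genuine supremum after integrating against $P$ (the directedness/pasting step, including the measurability care needed to stay inside $L_{G}^{1}$), and to upgrade the weak convergence $Q_{n}\to Q$ from bounded Lipschitz test functions to all of $L_{G}^{1}(\Omega_{T})$; in (iii) the analogous delicate input is the cocycle identity for the penalty terms. Both of these should be read off from the structural properties of $\mathbb{\tilde{E}}_{t}$ and $\alpha_{t}^{T}$ developed in \cite{LH}.
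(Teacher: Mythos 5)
Your overall architecture matches the paper's: for (i) you paste measures to make the family $\{E_{R}[\xi|\mathcal{F}_{t}]-\alpha_{t}^{T}(R):R\in\mathcal{P}(t,P)\}$ upward directed, extract a weakly convergent approximating sequence, and pass to the limit using lower semicontinuity of the penalty; for (ii) you reduce, via the one-sided bound from (\ref{nn-e3-9}), to the integrated inequality $E_{Q}[\xi]-E_{P}[\alpha_{t}^{T}(Q)]\geq E_{P}[\mathbb{\tilde{E}}_{t}[\xi]]$ and again use lower semicontinuity together with the uniform bounds $E_{Q_{n}}[|\xi_{n}-\xi|]\leq\mathbb{\hat{E}}[|\xi_{n}-\xi|]$; for (iii) you use exactly the paper's two ingredients, namely the cocycle identity $\alpha_{t}^{T}(Q)=\alpha_{t}^{s}(Q)+E_{Q}[\alpha_{s}^{T}(Q)|\mathcal{F}_{t}]$ (Remark 3.6 in \cite{LH}) and the two one-sided inequalities at times $s$ and $t$, only finishing with unconditional rather than conditional expectations. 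So in substance this is the paper's proof.

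The one genuine divergence is the lower semicontinuity of $Q\mapsto E_{P}[\alpha_{t}^{T}(Q)]$. The paper does not prove it: it quotes Proposition 3.5 of \cite{LH} (inequality (\ref{nn-e3-10})) and obtains all directedness it needs by pasting \emph{probability measures} via Lemma 22 of \cite{HP21}. You instead propose to prove it via the identity $E_{P}[\alpha_{t}^{T}(Q)]=\sup_{\zeta\in L_{G}^{1}(\Omega_{T})}(E_{Q}[\zeta]-E_{P}[\mathbb{\tilde{E}}_{t}[\zeta]])$, whose nontrivial direction rests on upward directedness of the family $\{E_{Q}[\zeta|\mathcal{F}_{t}]-\mathbb{\tilde{E}}_{t}[\zeta]:\zeta\in L_{G}^{1}(\Omega_{T})\}$ by pasting \emph{random variables}: but $\zeta_{1}I_{A}+\zeta_{2}I_{A^{c}}$ with $A\in\mathcal{F}_{t}$ defined only up to $P$-null sets (through $E_{Q}[\cdot|\mathcal{F}_{t}]$) is in general not quasi-continuous, hence need not lie in $L_{G}^{1}(\Omega_{T})$, and the locality of $\mathbb{\tilde{E}}_{t}$ over such pastes is not available in this non-dominated setting without the extended conditional expectation machinery. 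You flag this yourself, but as written it is the missing step; the safe route is the paper's, i.e.\ cite Proposition 3.5 of \cite{LH} (or reproduce its proof) rather than re-derive it by $\zeta$-pasting. A second, smaller slip: in (i) you cannot literally apply (ii) with $\xi_{n}\equiv\xi$, since your approximating $Q_{n}$ are only approximate maximizers and hence not in $\mathcal{P}_{\xi}^{\ast}(t,P)$ (that set is exactly what nonemptiness is to establish); you need the approximate-maximizer variant of the same limiting argument, which is what the paper carries out via (\ref{nn-e3-8}) and (\ref{nn-e3-10}). Your convexity and weak-compactness arguments, and part (iii), are fine.
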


\begin{proof}
(i) For each given $Q_{1}$, $Q_{2}\in \mathcal{P}(t,P)$, set $A=\{E_{Q_{1}}%
[\xi|\mathcal{F}_{t}]-\alpha_{t}^{T}(Q_{1})>E_{Q_{2}}[\xi|\mathcal{F}%
_{t}]-\alpha_{t}^{T}(Q_{2})\} \in \mathcal{F}_{t}$. By Lemma 22 in \cite{HP21},
we can find a $Q_{3}\in \mathcal{P}(t,P)$ such that%
\[
E_{Q_{3}}[X|\mathcal{F}_{t}]=E_{Q_{1}}[X|\mathcal{F}_{t}]I_{A}+E_{Q_{2}%
}[X|\mathcal{F}_{t}]I_{A^{c}},\text{ }P\text{-a.s., for }X\in L_{G}^{1}%
(\Omega_{T}).
\]
It is easy to verify that $\alpha_{t}^{T}(Q_{3})=\alpha_{t}^{T}(Q_{1}%
)I_{A}+\alpha_{t}^{T}(Q_{2})I_{A^{c}}$, $P$-a.s. Thus%
\[
E_{Q_{3}}[\xi|\mathcal{F}_{t}]-\alpha_{t}^{T}(Q_{3})=(E_{Q_{1}}[\xi
|\mathcal{F}_{t}]-\alpha_{t}^{T}(Q_{1}))\vee(E_{Q_{2}}[\xi|\mathcal{F}%
_{t}]-\alpha_{t}^{T}(Q_{2})),\text{ }P\text{-a.s.}%
\]
From this we can find a sequence $\{Q_{n}^{\prime}:n\geq1\} \subseteq
\mathcal{P}(t,P)$ such that $\{Q_{n}^{\prime}:n\geq1\}$ converges weakly to
$Q$ and%
\begin{equation}
E_{P}\left[  E_{Q_{n}^{\prime}}[\xi|\mathcal{F}_{t}]-\alpha_{t}^{T}%
(Q_{n}^{\prime})\right]  =E_{Q_{n}^{\prime}}[\xi]-E_{Q_{n}^{\prime}}\left[
\alpha_{t}^{T}(Q_{n}^{\prime})\right]  \uparrow E_{P}\left[  \mathbb{\tilde
{E}}_{t}[\xi]\right]  \text{.} \label{nn-e3-8}%
\end{equation}
It is easy to check that $Q\in \mathcal{P}(t,P)$ by (\ref{nn-e3-6}). By
Proposition 3.5 in \cite{LH}, we can deduce%
\begin{equation}
E_{Q}\left[  \alpha_{t}^{T}(Q)\right]  \leq \underset{n\rightarrow \infty}%
{\lim \inf}E_{Q_{n}^{\prime}}\left[  \alpha_{t}^{T}(Q_{n}^{\prime})\right]  ,
\label{nn-e3-10}%
\end{equation}
which implies $E_{P}\left[  E_{Q}[\xi|\mathcal{F}_{t}]-\alpha_{t}%
^{T}(Q)\right]  =E_{Q}[\xi]-E_{Q}\left[  \alpha_{t}^{T}(Q)\right]  \geq
E_{P}\left[  \mathbb{\tilde{E}}_{t}[\xi]\right]  $ by (\ref{nn-e3-8}). Thus we
obtain $Q\in \mathcal{P}_{\xi}^{\ast}(t,P)$ by (\ref{nn-e3-9}). By
(\ref{nnn-e3-8}) and (\ref{nn-e3-10}), it is easy to check that $\mathcal{P}%
_{\xi}^{\ast}(t,P)$ is convex and weakly compact.

(ii) It is easy to check that $Q\in \mathcal{P}(t,P)$ by (\ref{nn-e3-6}). Since
$E_{Q_{n}}[|\xi_{n}-\xi|]+E_{P}[|\mathbb{\tilde{E}}_{t}[\xi_{n}%
]-\mathbb{\tilde{E}}_{t}[\xi]|]\leq2\mathbb{\hat{E}}[|\xi_{n}-\xi|]$, we can
obtain $E_{P}\left[  E_{Q}[\xi|\mathcal{F}_{t}]-\alpha_{t}^{T}(Q)\right]  \geq
E_{P}\left[  \mathbb{\tilde{E}}_{t}[\xi]\right]  $ by (\ref{nn-e3-10}), which
implies $Q\in \mathcal{P}_{\xi}^{\ast}(t,P)$.

(iii) For each given $s\in \lbrack t,T]$, set
\[
\alpha_{t}^{s}(Q)=\underset{X\in L_{G}^{1}(\Omega_{s})}{ess\sup}^{Q}\left(
E_{Q}[X|\mathcal{F}_{t}]-\mathbb{\tilde{E}}_{t}[X]\right)  ,\text{
}Q\text{-a.s.,}%
\]
it follows from Remark 3.6 in \cite{LH} that $\alpha_{t}^{T}(Q)=\alpha_{t}%
^{s}(Q)+E_{Q}[\alpha_{s}^{T}(Q)|\mathcal{F}_{t}]$, $Q$-a.s. Since
$Q\in \mathcal{P}_{\xi}^{\ast}(t,P)$,%
\[
\mathbb{\tilde{E}}_{s}[\xi]\geq E_{Q}[\xi|\mathcal{F}_{s}]-\alpha_{s}%
^{T}(Q)\text{ and }\mathbb{\tilde{E}}_{t}[\xi]=\mathbb{\tilde{E}}%
_{t}[\mathbb{\tilde{E}}_{s}[\xi]]\geq E_{Q}[\mathbb{\tilde{E}}_{s}%
[\xi]|\mathcal{F}_{t}]-\alpha_{t}^{s}(Q)\text{, }Q\text{-a.s.,}%
\]
we obtain $E_{Q}[\xi|\mathcal{F}_{s}]-\alpha_{s}^{T}(Q)=\mathbb{\tilde{E}}%
_{s}[\xi]$, $Q$-a.s.
\end{proof}

For each fixed $t\in \lbrack0,T]$, set%
\begin{equation}
\mathcal{\tilde{P}}_{t}^{\ast}=\{P\in \mathcal{P}:Y_{t}^{\ast}=V(t,X_{t}^{\ast
}),\text{ }P\text{-a.s.}\}, \label{nn-e3-11}%
\end{equation}
where $(X^{\ast},Y^{\ast})$ is the solution of (\ref{nn-e2-1}) corresponding
to the optimal control $u^{\ast}$ and $V(\cdot,\cdot)$ is defined in
(\ref{nn-e2-6}).

\begin{proposition}
\label{pro-2} $\mathcal{\tilde{P}}_{t}^{\ast}$ defined in (\ref{nn-e3-11}) is
a nonempty convex and weakly compact set.
\end{proposition}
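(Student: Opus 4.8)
Put $\eta_{t}:=Y_{t}^{\ast}-V(t,X_{t}^{\ast})$. The first step is to record that $\eta_{t}\geq0$ q.s.\ and $\eta_{t}\in L_{G}^{1}(\Omega_{T})$: the sign follows from the flow property of the $\tilde{G}$-FBSDE (the restriction of $(X^{\ast},Y^{\ast})$ to $[t,T]$ solves (\ref{nn-e2-1}) started from $(t,X_{t}^{\ast})$ under the admissible control $u^{\ast}|_{[t,T]}\in M_{G}^{2}(t,T;U)$, so $Y_{t}^{\ast}=Y_{t}^{t,X_{t}^{\ast},u^{\ast}}\geq V(t,X_{t}^{\ast})$ q.s.\ by (\ref{nn-e2-6})), and integrability follows from $X_{t}^{\ast}\in L_{G}^{p}(\Omega_{t})$ for every $p\geq1$, the continuity and polynomial growth of $V$ (see \cite{MJL0}) and $Y_{t}^{\ast}\in L_{G}^{1}(\Omega_{t})$. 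Since for $P\in\mathcal{P}$ one has $\eta_{t}\geq0$ $P$-a.s., this gives $\mathcal{\tilde{P}}_{t}^{\ast}=\{P\in\mathcal{P}:E_{P}[\eta_{t}]=0\}$, and it remains to exhibit one such $P$ and to check convexity and weak closedness in $\mathcal{P}$.

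\textbf{Nonemptiness.} By the DPP for (\ref{nn-e2-1}) from \cite{MJL0}, $V(0,x_{0})=\underset{u}{ess\inf}\,\hat{Y}_{0}^{u}$, where $\hat{Y}^{u}$ solves on $[0,t]$ the $\tilde{G}$-BSDE with generator $g$ and terminal value $V(t,X_{t}^{0,x_{0},u})$; taking $u=u^{\ast}$ and writing $\hat{Y}:=\hat{Y}^{u^{\ast}}$ we get $V(0,x_{0})\leq\hat{Y}_{0}$, while the comparison theorem for $\tilde{G}$-BSDEs (applied to $\hat{Y}$ and $Y^{\ast}|_{[0,t]}$, which solves the same equation with the larger terminal value $Y_{t}^{\ast}\geq V(t,X_{t}^{\ast})$) gives $\hat{Y}_{0}\leq Y_{0}^{\ast}=V(0,x_{0})$; hence $\hat{Y}_{0}=Y_{0}^{\ast}$. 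Now pick $P^{\ast\ast}\in\mathcal{P}$ attaining $\mathbb{\tilde{E}}[\hat{\zeta}]=E_{P^{\ast\ast}}[\hat{\zeta}]-\alpha_{0}^{T}(P^{\ast\ast})$, where $\hat{\zeta}:=V(t,X_{t}^{\ast})+\int_{0}^{t}g(r,X_{r}^{\ast},\hat{Y}_{r},u_{r}^{\ast})d\langle B\rangle_{r}$; such a $P^{\ast\ast}$ exists because $\mathcal{P}_{\hat{\zeta}}^{\ast}(0,P_{0})\neq\emptyset$ for every $P_{0}\in\mathcal{P}$ by Proposition \ref{pro-1}(i), and by Proposition \ref{pro-1}(iii) it remains a maximizer for $\mathbb{\tilde{E}}_{s}[\hat{\zeta}]$ at every $s\in[0,t]$. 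Since $\hat{\zeta}$ is $\mathcal{F}_{t}$-measurable, this optimality forces the penalty $\alpha_{s}^{T}(P^{\ast\ast})$ and the non-increasing part of the decomposition of the $\tilde{G}$-BSDE $\hat{Y}$ to vanish $P^{\ast\ast}$-a.s.\ on $[0,t]$ (the $\tilde{G}$-analogue of Proposition 3.7 in \cite{LH}, using the cocycle identity of Remark 3.6 in \cite{LH} and the decomposition of $\tilde{G}$-BSDEs from \cite{MJL0}), so that under $P^{\ast\ast}$ the equation for $\hat{Y}$ is exactly the classical $P^{\ast\ast}$-BSDE with generator $g(r,X_{r}^{\ast},\cdot,u_{r}^{\ast})$ and terminal value $V(t,X_{t}^{\ast})$, while $Y^{\ast}|_{[0,t]}$ is a $P^{\ast\ast}$-supersolution of the same classical equation with the larger terminal value $Y_{t}^{\ast}$. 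Linearizing the difference $Y^{\ast}-\hat{Y}$ and applying the $\Gamma$-transform $\Gamma_{s}=\exp(\int_{0}^{s}\beta_{r}d\langle B\rangle_{r})$, $\beta_{r}=\int_{0}^{1}g_{y}(r,X_{r}^{\ast},\hat{Y}_{r}+\theta(Y_{r}^{\ast}-\hat{Y}_{r}),u_{r}^{\ast})d\theta$ (so that $\Gamma_{s}\geq e^{-L\bar{\sigma}^{2}T}=:C>0$), the comparison and strict comparison for classical $P^{\ast\ast}$-BSDEs yield $0=Y_{0}^{\ast}-\hat{Y}_{0}\geq C\,E_{P^{\ast\ast}}[\eta_{t}]\geq0$, hence $E_{P^{\ast\ast}}[\eta_{t}]=0$, i.e.\ $P^{\ast\ast}\in\mathcal{\tilde{P}}_{t}^{\ast}$. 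The step I expect to be the main obstacle is precisely the reduction of the $\tilde{G}$-BSDE $\hat{Y}$ to a classical $P^{\ast\ast}$-BSDE: because $\mathbb{\tilde{E}}$ is convex but not linear, this identity holds only for a carefully chosen optimal measure and only $P^{\ast\ast}$-a.s., and establishing it is exactly the mechanism behind estimate (\ref{nnn-e3-4}).

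\textbf{Convexity and weak compactness.} For $P_{1},P_{2}\in\mathcal{\tilde{P}}_{t}^{\ast}$ and $\lambda\in[0,1]$ we have $\lambda P_{1}+(1-\lambda)P_{2}\in\mathcal{P}$ by Theorem \ref{re-1}, and $(\lambda P_{1}+(1-\lambda)P_{2})(\eta_{t}>0)=\lambda P_{1}(\eta_{t}>0)+(1-\lambda)P_{2}(\eta_{t}>0)=0$ since $\eta_{t}\geq0$ q.s.; hence $\mathcal{\tilde{P}}_{t}^{\ast}$ is convex. Since $\mathcal{P}$ is weakly compact and $\mathcal{\tilde{P}}_{t}^{\ast}\subseteq\mathcal{P}$, it suffices to prove $\mathcal{\tilde{P}}_{t}^{\ast}$ weakly closed: if $P_{n}\in\mathcal{\tilde{P}}_{t}^{\ast}$ and $P_{n}\rightarrow P$ weakly, then $P\in\mathcal{P}$ and, $\eta_{t}\in L_{G}^{1}(\Omega_{T})$ being non-negative, the map $Q\mapsto E_{Q}[\eta_{t}]$ is weakly continuous on $\mathcal{P}$ (a standard property of $L_{G}^{1}(\Omega_{T})$ random variables; cf.\ the argument for (\ref{nn-e3-10}) in the proof of Proposition \ref{pro-1}(i)), so $E_{P}[\eta_{t}]=\lim_{n}E_{P_{n}}[\eta_{t}]=0$ and therefore $P\in\mathcal{\tilde{P}}_{t}^{\ast}$.
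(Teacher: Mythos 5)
Your overall route is the paper's route: introduce the auxiliary $\tilde G$-BSDE on $[0,t]$ with terminal value $V(t,X_t^{\ast})$ (your $\hat Y$ is the paper's $Y$), select a measure through Proposition \ref{pro-1}(i) and propagate optimality to all $s\in[0,t]$ through Proposition \ref{pro-1}(iii), derive a linearized comparison estimate of the form $Y_0^{\ast}-\hat Y_0\geq C\,E_{P^{\ast \ast}}[\eta_t]$, and close with the DPP and $\eta_t\geq0$ q.s.; the convexity/weak-compactness part and the reduction to $E_P[\eta_t]=0$ also match the paper. However, the step on which you hang the key estimate is wrong as stated: optimality of $P^{\ast \ast}$ does \emph{not} force the penalty $\alpha_{s}^{T}(P^{\ast \ast})$ (or $\alpha_{s}^{t}(P^{\ast \ast})$) to vanish, and $\hat Y$ is \emph{not} a classical $P^{\ast \ast}$-BSDE. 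For a genuinely convex (non-sublinear) $\tilde G$, membership in $\mathcal{P}_{\hat \zeta}^{\ast}(0,P_0)$ only yields $E_{P^{\ast \ast}}[\hat \zeta|\mathcal{F}_{s}]-\alpha_{s}^{T}(P^{\ast \ast})=\mathbb{\tilde{E}}_{s}[\hat \zeta]$, $P^{\ast \ast}$-a.s.; the maximizer attains the supremum with a generically nonzero penalty ($\alpha_{s}^{T}(Q)=0$ would mean $E_{Q}[\cdot|\mathcal{F}_{s}]\leq \mathbb{\tilde{E}}_{s}[\cdot]$ on all of $L_{G}^{1}(\Omega_{T})$, which fails for the relevant optimal measures, e.g.\ in Example \ref{exa1} with a strictly convex $\tilde G$), and the $\mathcal{F}_{t}$-measurability of $\hat \zeta$ does not help because $\alpha_{s}^{t}$ need not vanish either. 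Consequently neither of the two ingredients you feed into the classical comparison theorem — $\hat Y$ solving the classical $P^{\ast \ast}$-BSDE, and $Y^{\ast}|_{[0,t]}$ being a classical $P^{\ast \ast}$-supersolution — is available: from (\ref{nn-e3-9}) and Proposition \ref{pro-1}(iii) you only get the penalized relations, and deferring to ``the mechanism behind (\ref{nnn-e3-4})'' does not repair this, since that mechanism never shows the penalty is zero.

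What the paper does instead (and what you need) is to keep the penalty on \emph{both} sides: under $P$ chosen for $\xi'=V(t,X_t^{\ast})+\int_0^t g(r,X_r^{\ast},Y_r,u_r^{\ast})d\langle B\rangle_r$, Proposition \ref{pro-1}(iii) gives the equality (\ref{nn-e3-12}) for the auxiliary solution with the term $-\alpha_{s}^{t}(P)$, while (\ref{nn-e3-9}) gives the inequality (\ref{nn-e3-13}) for $Y^{\ast}$ with the \emph{same} $-\alpha_{s}^{t}(P)$; subtracting cancels the penalty, and the linearization/$\Gamma$-transform estimate is then carried out by the method of Theorem 4.4 in \cite{LH} (spelled out in the appendix proof of (\ref{nnn-e3-4})), where the nonnegative slack $\delta_s=\mathbb{\tilde{E}}_{s}[\cdot]-E_{P}[\cdot|\mathcal{F}_{s}]+\alpha_{s}(\cdot)$ is controlled via the cocycle identity for $\alpha$ and a time discretization — it is bounded, not shown to vanish. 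This yields exactly (\ref{nn-e3-14}), i.e.\ $Y_0^{\ast}-Y_0\geq C\,E_{P}[\eta_t]$, after which $Y_0\geq V(0,x_0)=Y_0^{\ast}$ (DPP) and $\eta_t\geq0$ q.s.\ give $E_P[\eta_t]=0$; note this also makes your preliminary identification $\hat Y_0=Y_0^{\ast}$ via the comparison theorem for $\tilde G$-BSDEs unnecessary, since it drops out of (\ref{nn-e3-14}) and the DPP. Your convexity and weak-closedness arguments for $\{P\in\mathcal{P}:E_P[\eta_t]=0\}$ are consistent with the paper's concluding remark and are fine.
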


\begin{proof}
Consider the following BSDE under $\tilde{G}$-expectation:%
\[
Y_{s}=\mathbb{\tilde{E}}_{s}\left[  V(t,X_{t}^{\ast})+\int_{s}^{t}%
g(r,X_{r}^{\ast},Y_{r},u_{r}^{\ast})d\langle B\rangle_{r}\right]  ,\text{
}s\in \lbrack0,t].
\]
By (i) in Proposition \ref{pro-1}, there exists a $P\in \mathcal{P}$ such that
$E_{P}[\xi^{\prime}]-\alpha_{0}^{t}(P)=\mathbb{\tilde{E}}[\xi^{\prime}]$,
where $\xi^{\prime}=V(t,X_{t}^{\ast})+\int_{0}^{t}g(r,X_{r}^{\ast},Y_{r}%
,u_{r}^{\ast})d\langle B\rangle_{r}$. It follows from (iii) in Proposition
\ref{pro-1} that%
\begin{equation}
Y_{s}=E_{P}\left[  \left.  V(t,X_{t}^{\ast})+\int_{s}^{t}g(r,X_{r}^{\ast
},Y_{r},u_{r}^{\ast})d\langle B\rangle_{r}\right \vert \mathcal{F}_{s}\right]
-\alpha_{s}^{t}(P),\text{ }P\text{-a.s.} \label{nn-e3-12}%
\end{equation}
By (\ref{nn-e3-9}), we have%
\begin{equation}
Y_{s}^{\ast}\geq E_{P}\left[  \left.  Y_{t}^{\ast}+\int_{s}^{t}g(r,X_{r}%
^{\ast},Y_{r}^{\ast},u_{r}^{\ast})d\langle B\rangle_{r}\right \vert
\mathcal{F}_{s}\right]  -\alpha_{s}^{t}(P),\text{ }P\text{-a.s.}
\label{nn-e3-13}%
\end{equation}
By using the same method as in the proof of Theorem 4.4 in \cite{LH} for
(\ref{nn-e3-12}) and (\ref{nn-e3-13}), we can deduce that%
\begin{equation}
Y_{0}^{\ast}-Y_{0}\geq E_{P}\left[  (Y_{t}^{\ast}-V(t,X_{t}^{\ast}%
))\exp \left(  \int_{0}^{t}\int_{0}^{1}g_{y}(r,X_{r}^{\ast},Y_{r}+\alpha
(Y_{r}^{\ast}-Y_{r}),u_{r}^{\ast})d\alpha d\langle B\rangle_{r}\right)
\right]  . \label{nn-e3-14}%
\end{equation}
It follows from Theorem 4.5 in \cite{MJL0} that $Y_{t}^{\ast}\geq
V(t,X_{t}^{\ast})$, q.s. By Theorem 4.10 (DPP) in \cite{MJL0}, we have
$Y_{0}\geq V(0,x_{0})=Y_{0}^{\ast}$. Thus we obtain $Y_{t}^{\ast}%
=V(t,X_{t}^{\ast}),$ $P$-a.s., from (\ref{nn-e3-14}). Note that $Y_{t}^{\ast
}=V(t,X_{t}^{\ast}),$ $P$-a.s., is equivalent to $E_{P}[Y_{t}^{\ast}%
-V(t,X_{t}^{\ast})]=0$. Then it is easy to verify that $\mathcal{\tilde{P}%
}_{t}^{\ast}$ is convex and weakly compact from this point.
\end{proof}

The following definition of the first-order super-jet and sub-jet of
$V(\cdot,\cdot)$ in $x$ can be found in \cite{CIP}. For each fixed
$(t,x)\in \lbrack0,T]\times \mathbb{R}$,%
\[
\left \{
\begin{array}
[c]{rl}%
D_{x}^{1,+}V(t,x)= & \{a\in \mathbb{R}:V(t,x^{\prime})\leq V(t,x)+a(x^{\prime
}-x)+o(|x^{\prime}-x|)\text{ as }x^{\prime}\rightarrow x\},\\
D_{x}^{1,-}V(t,x)= & \{a\in \mathbb{R}:V(t,x^{\prime})\geq V(t,x)+a(x^{\prime
}-x)+o(|x^{\prime}-x|)\text{ as }x^{\prime}\rightarrow x\}.
\end{array}
\right.
\]
The following theorem is our second main result.

\begin{theorem}
\label{MP-DPP-2}Let assumptions (A1)-(A2) hold and let $u^{\ast}$ be an
optimal control for control problem (\ref{nn-e2-2}). Then, for each fixed
$t\in \lbrack0,T]$ and $P\in \mathcal{\tilde{P}}_{t}^{\ast}$, we have%
\[
D_{x}^{1,-}V(t,X_{t}^{\ast})\subseteq \lbrack p_{t},\bar{p}_{t}],\text{
}P\text{-a.s.,}%
\]
where
\begin{equation}
p_{t}=\underset{Q\in \mathcal{P}^{\ast}(t,P)}{ess\inf}^{P}p_{t}^{Q},\text{
}\bar{p}_{t}=\underset{Q\in \mathcal{P}^{\ast}(t,P)}{ess\sup}^{P}p_{t}%
^{Q},\text{ }P\text{-a.s.,} \label{nn-e3-15}%
\end{equation}
$\mathcal{\tilde{P}}_{t}^{\ast}$ is defined in (\ref{nn-e3-11}),
$\mathcal{P}^{\ast}(t,P)$ is defined in (\ref{nn-e3-7}) corresponding to
$\xi_{t}^{\ast}=\Phi(X_{T}^{\ast})+\int_{t}^{T}g(s,X_{s}^{\ast},Y_{s}^{\ast
},u_{s}^{\ast})d\langle B\rangle_{s}$, $(p^{Q},q^{Q},N^{Q})$ is the solution
of the adjoint equation (\ref{nn-e2-5}) under $Q$. Furthermore, if $p_{t}%
=\bar{p}_{t}$, $P$-a.s., then
\[
\{p_{t}\} \subseteq D_{x}^{1,+}V(t,X_{t}^{\ast}),\text{ }P\text{-a.s.}%
\]

\end{theorem}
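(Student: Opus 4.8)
The plan is to reduce the statement at a fixed $t$ and a fixed $P\in\mathcal{\tilde P}_t^\ast$ to a perturbation/comparison argument for the forward--backward system started at time $t$ from the random initial value $X_t^\ast$, mimicking the structure of Theorem \ref{MP-DPP-1} but replacing the deterministic initial point $(0,x_0)$ by the random point $(t,X_t^\ast)$. Concretely, fix $x'\in\mathbb{R}$ near $X_t^\ast$; spike or shift the forward state at time $t$ from $X_t^\ast$ to $X_t^\ast+\varepsilon$ (or an arbitrary direction) and run the optimal control $u^\ast$ on $[t,T]$. On the set $\{Y_t^\ast=V(t,X_t^\ast)\}$, which has full $P$-measure since $P\in\mathcal{\tilde P}_t^\ast$, one has from Theorem 4.5 and Theorem 4.10 (DPP) in \cite{MJL0} that $V(t,X_t^\ast+\varepsilon)\le Y_t^{t,X_t^\ast+\varepsilon,u^\ast}$, while $V(t,X_t^\ast)=Y_t^\ast$. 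I would then expand $Y_t^{t,X_t^\ast+\varepsilon,u^\ast}-Y_t^\ast$ to first order in $\varepsilon$ using the variational equation for the forward SDE and the first-order expansion of the BSDE under $\tilde G$-expectation; the first-order coefficient is exactly an $\mathbb{\tilde E}_t$-type functional of the adjoint process, and by the representation theorem (\ref{nn-e3-9}) it is controlled by $E_Q[\,\cdot\,|\mathcal F_t]-\alpha_t^T(Q)$ over $Q\in\mathcal{P}(t,P)$, with the supremum attained precisely on $\mathcal{P}^\ast(t,P)$ (the set defined in (\ref{nn-e3-7}) for $\xi_t^\ast$). This is where the processes $p_t^Q$ enter: the directional derivative bound reads, roughly, $V(t,X_t^\ast+\varepsilon)-V(t,X_t^\ast)\le \varepsilon\, p_t^{Q}+o(\varepsilon)$ for $Q$ near-optimal, and taking $\varepsilon\downarrow0$ after optimizing gives the upper bound $\bar p_t$ for any element of $D_x^{1,-}V(t,X_t^\ast)$; running $\varepsilon\uparrow0$ gives the lower bound $p_t$.

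The key technical engine is the estimate announced in the introduction as (\ref{nnn-e3-29}), together with the convergence property: for $Q_n\in\mathcal{P}_{\varepsilon_n}^\ast(t,P)$ with $\varepsilon_n\downarrow0$ one has $p_t^{Q_n}\to\bar p_t$ $P$-a.s., and for $Q_n'\in\mathcal{P}_{\varepsilon_n'}^\ast(t,P)$ with $\varepsilon_n'\uparrow0$ one has $p_t^{Q_n'}\to p_t$ $P$-a.s. I would establish this by: (a) a stability estimate for the adjoint equation (\ref{nn-e2-5}) in $Q$, i.e. bounding $E_P[|p_t^{Q}-p_t^{Q'}|^2]$ (or the relevant localized quantity) by the ``distance'' between the near-optimal sets, using that all such $Q$ agree with $P$ on $\mathcal F_t$ and are uniformly equivalent on $[t,T]$ in the density sense guaranteed by weak compactness of $\mathcal{P}$; (b) using Proposition \ref{pro-1}(ii) to pass to weak limits of $\{Q_n\}$ inside $\mathcal{P}^\ast(t,P)$, and then upgrading weak $L^2$-convergence of $\{p_t^{Q_n}\}$ to a.s.\ convergence via the monotone structure (the $\mathrm{ess\,sup}$/$\mathrm{ess\,inf}$ in (\ref{nn-e3-15}) and the lattice property of $\mathcal{P}(t,P)$ used already in the proof of Proposition \ref{pro-1}(i)) — this monotonicity forces the weak limit to coincide with the monotone a.s.\ limit, which is the trick that circumvents the lack of strong compactness. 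Steps 6--7 referenced in the statement presumably carry out (a)--(b); I would mirror them.

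For the super-jet inclusion under the extra hypothesis $p_t=\bar p_t$ $P$-a.s.: here $D_x^{1,-}V(t,X_t^\ast)$ is squeezed to the single point $p_t$, and I would show the matching opposite inequality $V(t,X_t^\ast+\varepsilon)-V(t,X_t^\ast)\le \varepsilon\,p_t+o(\varepsilon)$ and also $\ge \varepsilon\,p_t+o(\varepsilon)$, so that $p_t\in D_x^{1,+}V(t,X_t^\ast)$. The point is that when the upper and lower directional-derivative bounds coincide, the first-order expansion becomes exact, so $p_t$ simultaneously super- and sub-differentiates $V(t,\cdot)$ at $X_t^\ast$; combined with $D_x^{1,-}\subseteq[p_t,\bar p_t]=\{p_t\}$ this yields $\{p_t\}\subseteq D_x^{1,+}V(t,X_t^\ast)$, $P$-a.s.

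The main obstacle, as the authors flag, is precisely the a.s.\ convergence of $\{p_t^{Q_n}\}$: weak compactness of $\mathcal{P}$ only yields a weakly $L^2$-convergent subsequence of the adjoint terminal values, which is too weak to identify the pointwise limit as $\bar p_t$ (resp.\ $p_t$). Overcoming it requires the quantitative estimate (\ref{nnn-e3-29}) controlling $p_t^{Q_n}-p_t^{Q_m}$ by $\varepsilon_n\vee\varepsilon_m$ (so that $\{p_t^{Q_n}\}$ is $P$-a.s.\ Cauchy along the optimizing sequence), plus a careful verification that the resulting a.s.\ limit does not depend on the chosen sequence and equals the $\mathrm{ess\,sup}$/$\mathrm{ess\,inf}$ in (\ref{nn-e3-15}). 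A secondary subtlety is that all estimates must be carried out under $P$ even though the adjoint equation is posed under each $Q$; this is handled because $Q|_{\mathcal F_t}=P|_{\mathcal F_t}$, so $p_t^Q$ is $\mathcal F_t$-measurable and its $P$-law is accessible, but one must be careful that the BSDE/adjoint solutions on $[t,T]$ are compared using the correct (possibly singular across $\mathcal{P}$) filtration-$t$-conditional structure.
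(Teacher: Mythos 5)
Your overall strategy (perturb the initial state at time $t$, compare $V(t,X_t^{\ast}+\varepsilon)$ with $Y_t^{\varepsilon}$ via the DPP on the full-measure set $\{Y_t^{\ast}=V(t,X_t^{\ast})\}$, and express the first-order coefficient through the adjoint process under near-optimal measures $Q\in\mathcal{P}_{\varepsilon}^{\ast}(t,P)$) matches the paper's Steps 1--2 and 6--7. But the heart of the proof is exactly the piece you leave unsubstantiated: the estimate (\ref{nnn-e3-29}) and the resulting $P$-a.s.\ convergence $p_t^{Q_n}\rightarrow\bar p_t$ (resp.\ $p_t^{Q'_n}\rightarrow p_t$). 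Your proposed route (a) — an $L^2(P)$ stability estimate for the adjoint equation in $Q$, justified by the measures being ``uniformly equivalent on $[t,T]$ in the density sense guaranteed by weak compactness of $\mathcal{P}$'' — rests on a false premise: the family $\mathcal{P}$ is nondominated, and distinct $Q,Q'\in\mathcal{P}(t,P)$ are in general mutually singular on $\mathcal{F}_T$; weak compactness gives no density relation, so there is no such stability bound for $p_t^Q-p_t^{Q'}$. Likewise, your route (b) (upgrading weak $L^2$-convergence to a.s.\ convergence via the lattice/pasting property of $\mathcal{P}(t,P)$) does not work as stated: pasting gives monotone approximations of the essential supremum in the representation of $\mathbb{\tilde E}_t$, but it does not make the map $\varepsilon\mapsto p_t^{Q_{\varepsilon}}$ monotone for arbitrary choices $Q_{\varepsilon}\in\mathcal{P}_{\varepsilon}^{\ast}(t,P)$. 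Also note that (\ref{nnn-e3-29}) is not a two-sided Cauchy bound in $\varepsilon_n\vee\varepsilon_m$ as you describe; it is a one-sided ``almost monotonicity'' inequality.

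What the paper actually does, and what your sketch is missing, is a two-sided use of the variational inequality (\ref{nn-e3-24}): fixing $\varepsilon'>\varepsilon$, the optimizer $Q_{\varepsilon}$ for $\xi_{\varepsilon}$ gives the lower bound $\hat Y_t^{\varepsilon',\varepsilon}\geq p_t^{Q_{\varepsilon}}(\varepsilon'-\varepsilon)-\mathrm{err}$, while the optimizer $Q_{\varepsilon'}$ for $\xi_{\varepsilon'}$ (applied to the pair in reverse order) gives the upper bound $\hat Y_t^{\varepsilon',\varepsilon}\leq p_t^{Q_{\varepsilon'}}(\varepsilon'-\varepsilon)+\mathrm{err}$; dividing by $\varepsilon'-\varepsilon>0$ yields (\ref{nnn-e3-29}) with an error of the form $C(1+|X_t^{\ast}|^2)(\phi^N(\delta)+\sqrt{1/N}+\sqrt{(|\varepsilon'|+|\varepsilon|)/\delta})$ — no stability of the adjoint equation in $Q$ is needed. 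The identification of the limit then uses the explicit linear-BSDE representation $p_t^{Q}=E_{Q}[\Phi_x(X_T^{\ast})l_T+\int_t^T g_x(s)l_s\,d\langle B\rangle_s\,|\,\mathcal{F}_t]$ together with weak convergence of a subsequence $\tilde Q_n\rightarrow Q^{\ast}\in\mathcal{P}^{\ast}(t,P)$ (Proposition \ref{pro-1}(ii)) and Fatou's lemma, sandwiching $p_t^{Q^{\ast}}$ between $\liminf$ and $\limsup$ of $p_t^{\tilde Q_n}$; combined with (\ref{nnn-e3-29}) applied at $\varepsilon=0$ and along the sequence, this forces $\lim_n p_t^{Q_n}=\bar p_t$ $P$-a.s. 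Finally, a smaller point on your Step for the super-jet: you do not need (and cannot in general prove) the reverse inequality $V(t,X_t^{\ast}+\varepsilon)-V(t,X_t^{\ast})\geq p_t\varepsilon+o(\varepsilon)$; the super-jet inclusion only requires the two one-sided upper bounds, with $\bar p_t$ for $\varepsilon>0$ and $p_t$ for $\varepsilon<0$, which coincide when $p_t=\bar p_t$.
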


\begin{proof}
For each given $\varepsilon \in \lbrack-1,1]$, consider the following
forward-backward SDE under $\tilde{G}$-expectation:%
\[
\left \{
\begin{array}
[c]{l}%
dX_{s}^{\varepsilon}=h(s,X_{s}^{\varepsilon},u_{s}^{\ast})d\langle
B\rangle_{s}+\sigma(s,X_{s}^{\varepsilon},u_{s}^{\ast})dB_{s},\text{ }%
X_{t}^{\varepsilon}=X_{t}^{\ast}+\varepsilon,\\
Y_{s}^{\varepsilon}=\mathbb{\tilde{E}}_{s}\left[  \Phi(X_{T}^{\varepsilon
})+\int_{s}^{T}g(r,X_{r}^{\varepsilon},Y_{r}^{\varepsilon},u_{r}^{\ast
})d\langle B\rangle_{r}\right]  ,\text{ }s\in \lbrack t,T].
\end{array}
\right.
\]
It is clear that $X_{s}^{0}=X_{s}^{\ast}$ and $Y_{s}^{0}=Y_{s}^{\ast}$ for
$s\in \lbrack t,T]$. For each given $\varepsilon^{\prime}$, $\varepsilon
\in \lbrack-1,1]$, set%
\[
\hat{X}_{s}^{\varepsilon^{\prime},\varepsilon}=X_{s}^{\varepsilon^{\prime}%
}-X_{s}^{\varepsilon},\text{ }\hat{Y}_{s}^{\varepsilon^{\prime},\varepsilon
}=Y_{s}^{\varepsilon^{\prime}}-Y_{s}^{\varepsilon},\text{ }s\in \lbrack t,T],
\]
and denote $\hat{X}_{s}^{\varepsilon}=\hat{X}_{s}^{\varepsilon,0}$, $\hat
{Y}_{s}^{\varepsilon}=\hat{Y}_{s}^{\varepsilon,0}$ for $s\in \lbrack t,T]$. The
proof is divided into seven steps.

Step 1: Estimates for $\hat{X}^{\varepsilon^{\prime},\varepsilon}$ and
$\hat{Y}^{\varepsilon^{\prime},\varepsilon}$.

By the estimates of forward-backward SDE under $\tilde{G}$-expectation (see
Theorem 3.2 and the proof of Lemma 4.2 in \cite{MJL0}), we can deduce%
\begin{equation}
\sup_{s\in \lbrack t,T]}\mathbb{\hat{E}}_{t}[|\hat{X}_{s}^{\varepsilon^{\prime
},\varepsilon}|^{4}]\leq C|\varepsilon^{\prime}-\varepsilon|^{4},\text{ }%
\sup_{s\in \lbrack t,T]}\mathbb{\hat{E}}_{t}[|X_{s}^{\varepsilon}|^{4}]\leq
C(1+|X_{t}^{\ast}|^{4}), \label{nn-e3-16}%
\end{equation}%
\begin{equation}
\sup_{s\in \lbrack t,T]}\mathbb{\hat{E}}_{t}[|\hat{Y}_{s}^{\varepsilon^{\prime
},\varepsilon}|^{2}]\leq C(1+|X_{t}^{\ast}|^{2})|\varepsilon^{\prime
}-\varepsilon|^{2},\text{ }\sup_{s\in \lbrack t,T]}\mathbb{\hat{E}}_{t}%
[|Y_{s}^{\varepsilon}|^{2}]\leq C(1+|X_{t}^{\ast}|^{4}), \label{nn-e3-17}%
\end{equation}
where the constant $C>0$ is dependent on $T$, $L$, $\bar{\sigma}^{2}$ and is
independent of $\varepsilon^{\prime}$, $\varepsilon$.

Step 2: Relation between $\hat{Y}^{\varepsilon^{\prime},\varepsilon}$ and
$\hat{X}^{\varepsilon^{\prime},\varepsilon}$.

For each given
\begin{equation}
Q_{\varepsilon}\in \mathcal{P}_{\varepsilon}^{\ast}(t,P):=\{Q\in \mathcal{P}%
(t,P):E_{Q}[\xi_{\varepsilon}|\mathcal{F}_{t}]-\alpha_{t}^{T}%
(Q)=\mathbb{\tilde{E}}_{t}[\xi_{\varepsilon}],\text{ }P\text{-a.s.}\}
\label{new-nnn-e3-1}%
\end{equation}
with $\xi_{\varepsilon}=\Phi(X_{T}^{\varepsilon})+\int_{t}^{T}g(r,X_{r}%
^{\varepsilon},Y_{r}^{\varepsilon},u_{r}^{\ast})d\langle B\rangle_{r}$, it
follows from (iii) in Proposition \ref{pro-1} and (\ref{nn-e3-9}) that for
each $s\in \lbrack t,T]$,
\begin{equation}
Y_{s}^{\varepsilon}=E_{Q_{\varepsilon}}\left[  \left.  \Phi(X_{T}%
^{\varepsilon})+\int_{s}^{T}g(r,X_{r}^{\varepsilon},Y_{r}^{\varepsilon}%
,u_{r}^{\ast})d\langle B\rangle_{r}\right \vert \mathcal{F}_{s}\right]
-\alpha_{s}^{T}(Q_{\varepsilon}),\text{ }Q_{\varepsilon}\text{-a.s.,}
\label{nn-e3-18}%
\end{equation}%
\begin{equation}
Y_{s}^{\varepsilon^{\prime}}\geq E_{Q_{\varepsilon}}\left[  \left.  \Phi
(X_{T}^{\varepsilon^{\prime}})+\int_{s}^{T}g(r,X_{r}^{\varepsilon^{\prime}%
},Y_{r}^{\varepsilon^{\prime}},u_{r}^{\ast})d\langle B\rangle_{r}\right \vert
\mathcal{F}_{s}\right]  -\alpha_{s}^{T}(Q_{\varepsilon}),\text{ }%
Q_{\varepsilon}\text{-a.s.} \label{nn-e3-19}%
\end{equation}
By using the same method as in the proof of Theorem 4.4 in \cite{LH} for
(\ref{nn-e3-18}) and (\ref{nn-e3-19}), we obtain%
\begin{equation}
\hat{Y}_{t}^{\varepsilon^{\prime},\varepsilon}\geq E_{Q_{\varepsilon}}\left[
\left.  \Phi_{x}(X_{T}^{\ast})\hat{X}_{T}^{\varepsilon^{\prime},\varepsilon
}\Lambda_{T}+I_{\Phi}^{\varepsilon^{\prime},\varepsilon}\Lambda_{T}+\int
_{t}^{T}(g_{x}(s)\hat{X}_{s}^{\varepsilon^{\prime},\varepsilon}+I_{g}%
^{\varepsilon^{\prime},\varepsilon}(s))\Lambda_{s}d\langle B\rangle
_{s}\right \vert \mathcal{F}_{t}\right]  ,\text{ }P\text{-a.s.,}
\label{nn-e3-20}%
\end{equation}
where $g_{x}^{\varepsilon}(s)=g_{x}(s,X_{s}^{\varepsilon},Y_{s}^{\varepsilon
},u_{s}^{\ast})$, similar for $g_{y}^{\varepsilon}(s)$, $h_{x}^{\varepsilon
}(s)$ and $\sigma_{x}^{\varepsilon}(s)$, $g_{x}(s)$, $g_{y}(s)$, $h_{x}(s)$
and $\sigma_{x}(s)$ are defined in (\ref{nn-e2-5}),%
\[%
\begin{array}
[c]{rl}%
\Lambda_{s}= & \exp \left(  \int_{t}^{s}g_{y}(r)d\langle B\rangle_{r}\right)
,\\
I_{\Phi}^{\varepsilon^{\prime},\varepsilon}= & \hat{X}_{T}^{\varepsilon
^{\prime},\varepsilon}\int_{0}^{1}(\Phi_{x}(X_{T}^{\varepsilon}+\alpha \hat
{X}_{T}^{\varepsilon^{\prime},\varepsilon})-\Phi_{x}(X_{T}^{\varepsilon
}))d\alpha+\hat{X}_{T}^{\varepsilon^{\prime},\varepsilon}(\Phi_{x}%
(X_{T}^{\varepsilon})-\Phi_{x}(X_{T}^{\ast})),\\
I_{g}^{\varepsilon^{\prime},\varepsilon}(s)= & \hat{X}_{s}^{\varepsilon
^{\prime},\varepsilon}\int_{0}^{1}(g_{x}(s,X_{s}^{\varepsilon}+\alpha \hat
{X}_{s}^{\varepsilon^{\prime},\varepsilon},Y_{s}^{\varepsilon}+\alpha \hat
{Y}_{s}^{\varepsilon^{\prime},\varepsilon},u_{s}^{\ast})-g_{x}^{\varepsilon
}(s))d\alpha+\hat{X}_{s}^{\varepsilon^{\prime},\varepsilon}(g_{x}%
^{\varepsilon}(s)-g_{x}(s))\\
& +\hat{Y}_{s}^{\varepsilon^{\prime},\varepsilon}\int_{0}^{1}(g_{y}%
(s,X_{s}^{\varepsilon}+\alpha \hat{X}_{s}^{\varepsilon^{\prime},\varepsilon
},Y_{s}^{\varepsilon}+\alpha \hat{Y}_{s}^{\varepsilon^{\prime},\varepsilon
},u_{s}^{\ast})-g_{y}^{\varepsilon}(s))d\alpha+\hat{Y}_{s}^{\varepsilon
^{\prime},\varepsilon}(g_{y}^{\varepsilon}(s)-g_{y}(s)).
\end{array}
\]
Applying the It\^{o} formula to $\Lambda_{s}p_{s}^{Q_{\varepsilon}}\hat{X}%
_{s}^{\varepsilon^{\prime},\varepsilon}$ under $Q_{\varepsilon}$, we obtain
$P$-a.s.%
\begin{equation}
p_{t}^{Q_{\varepsilon}}\hat{X}_{t}^{\varepsilon^{\prime},\varepsilon
}=E_{Q_{\varepsilon}}\left[  \left.  \Phi_{x}(X_{T}^{\ast})\hat{X}%
_{T}^{\varepsilon^{\prime},\varepsilon}\Lambda_{T}+\int_{t}^{T}(g_{x}%
(s)\hat{X}_{s}^{\varepsilon^{\prime},\varepsilon}-p_{s}^{Q_{\varepsilon}}%
I_{h}^{\varepsilon^{\prime},\varepsilon}(s)-q_{s}^{Q_{\varepsilon}}I_{\sigma
}^{\varepsilon^{\prime},\varepsilon}(s))\Lambda_{s}d\langle B\rangle
_{s}\right \vert \mathcal{F}_{t}\right]  , \label{nn-e3-22}%
\end{equation}
where%
\[%
\begin{array}
[c]{rl}%
I_{h}^{\varepsilon^{\prime},\varepsilon}(s)= & \hat{X}_{s}^{\varepsilon
^{\prime},\varepsilon}\int_{0}^{1}(h_{x}(s,X_{s}^{\varepsilon}+\alpha \hat
{X}_{s}^{\varepsilon^{\prime},\varepsilon},u_{s}^{\ast})-h_{x}^{\varepsilon
}(s))d\alpha+\hat{X}_{s}^{\varepsilon^{\prime},\varepsilon}(h_{x}%
^{\varepsilon}(s)-h_{x}(s)),\\
I_{\sigma}^{\varepsilon^{\prime},\varepsilon}(s)= & \hat{X}_{s}^{\varepsilon
^{\prime},\varepsilon}\int_{0}^{1}(\sigma_{x}(s,X_{s}^{\varepsilon}+\alpha
\hat{X}_{s}^{\varepsilon^{\prime},\varepsilon},u_{s}^{\ast})-\sigma
_{x}^{\varepsilon}(s))d\alpha+\hat{X}_{s}^{\varepsilon^{\prime},\varepsilon
}(\sigma_{x}^{\varepsilon}(s)-\sigma_{x}(s)).
\end{array}
\]
It follows from (\ref{nn-e3-20}) and (\ref{nn-e3-22}) that%
\begin{equation}
\hat{Y}_{t}^{\varepsilon^{\prime},\varepsilon}-p_{t}^{Q_{\varepsilon}}\hat
{X}_{t}^{\varepsilon^{\prime},\varepsilon}\geq E_{Q_{\varepsilon}}\left[
\left.  I_{\Phi}^{\varepsilon^{\prime},\varepsilon}\Lambda_{T}+\int_{t}%
^{T}(I_{g}^{\varepsilon^{\prime},\varepsilon}(s)+p_{s}^{Q_{\varepsilon}}%
I_{h}^{\varepsilon^{\prime},\varepsilon}(s)+q_{s}^{Q_{\varepsilon}}I_{\sigma
}^{\varepsilon^{\prime},\varepsilon}(s))\Lambda_{s}d\langle B\rangle
_{s}\right \vert \mathcal{F}_{t}\right]  ,\text{ }P\text{-a.s.}
\label{nn-e3-24}%
\end{equation}

Step 3: Estimates for the terms in the right side of (\ref{nn-e3-24}).

For each given positive integer $N$ and rational number $\delta \in(0,1]$, set
\[%
\begin{array}
[c]{rl}%
\phi_{g_{x}}^{N}(\delta):= & \sup \left \{  |g_{x}(s,x^{\prime},y^{\prime
},v)-g_{x}(s,x,y,v)|:s\in \lbrack0,T],\text{ }v\in U,\right. \\
& \left.  |x|\leq N,\text{ }|y|\leq N,\text{ }|x^{\prime}-x|\leq \delta,\text{
}|y^{\prime}-y|\leq \delta \right \}  ,
\end{array}
\]
similar for $\phi_{g_{y}}^{N}(\delta)$, $\phi_{h_{x}}^{N}(\delta)$,
$\phi_{\sigma_{x}}^{N}(\delta)$ and $\phi_{\Phi_{x}}^{N}(\delta)$. Set
$\phi^{N}(\delta)=\phi_{g_{x}}^{N}(\delta)+\phi_{g_{y}}^{N}(\delta
)+\phi_{h_{x}}^{N}(\delta)+\phi_{\sigma_{x}}^{N}(\delta)+\phi_{\Phi_{x}}%
^{N}(\delta)$, then $\phi^{N}(\delta)\rightarrow0$ as $\delta \rightarrow0$ by
(A1). Note that%
\begin{align*}
&  |g_{x}(s,X_{s}^{\varepsilon}+\alpha \hat{X}_{s}^{\varepsilon^{\prime
},\varepsilon},Y_{s}^{\varepsilon}+\alpha \hat{Y}_{s}^{\varepsilon^{\prime
},\varepsilon},u_{s}^{\ast})-g_{x}^{\varepsilon}(s)|\\
&  \leq \phi^{N}(\delta)+C(1+|X_{s}^{\varepsilon}|+|X_{s}^{\varepsilon^{\prime
}}|)\left(  \sqrt{N^{-1}(|X_{s}^{\varepsilon}|+|Y_{s}^{\varepsilon}|)}%
+\sqrt{\delta^{-1}(|\hat{X}_{s}^{\varepsilon^{\prime},\varepsilon}|+|\hat
{Y}_{s}^{\varepsilon^{\prime},\varepsilon}|)}\right)  ,
\end{align*}
where the constant $C>0$ depends on $L$. Then, by (\ref{nn-e3-16}),
(\ref{nn-e3-17}) and H\"{o}lder's inequality, we obtain%
\begin{equation}%
\begin{array}
[c]{l}%
\left \vert E_{Q_{\varepsilon}}\left[  \left.  \int_{t}^{T}\Lambda_{s}\hat
{X}_{s}^{\varepsilon^{\prime},\varepsilon}\int_{0}^{1}(g_{x}(s,X_{s}%
^{\varepsilon}+\alpha \hat{X}_{s}^{\varepsilon^{\prime},\varepsilon}%
,Y_{s}^{\varepsilon}+\alpha \hat{Y}_{s}^{\varepsilon^{\prime},\varepsilon
},u_{s}^{\ast})-g_{x}^{\varepsilon}(s))d\alpha d\langle B\rangle
_{s}\right \vert \mathcal{F}_{t}\right]  \right \vert \\
\leq C\int_{t}^{T}(\mathbb{\hat{E}}_{t}\left[  |\hat{X}_{s}^{\varepsilon
^{\prime},\varepsilon}|^{2}\right]  )^{1/2}[\phi^{N}(\delta)+(\mathbb{\hat{E}%
}_{t}\left[  |L_{1}^{\varepsilon^{\prime},\varepsilon}(s)|^{4}\right]
)^{1/4}(\mathbb{\hat{E}}_{t}\left[  |L_{2}^{\varepsilon^{\prime},\varepsilon
}(s)|^{4}\right]  )^{1/4}]ds\\
\leq C(1+|X_{t}^{\ast}|^{2})(\phi^{N}(\delta)+\sqrt{1/N}+\sqrt{(|\varepsilon
^{\prime}|+|\varepsilon|)/\delta})|\varepsilon^{\prime}-\varepsilon|,\text{
}P\text{-a.s.,}%
\end{array}
\label{nn-e3-25}%
\end{equation}
where $L_{1}^{\varepsilon^{\prime},\varepsilon}(s)=1+|X_{s}^{\varepsilon
}|+|X_{s}^{\varepsilon^{\prime}}|$, $L_{2}^{\varepsilon^{\prime},\varepsilon
}(s)=\sqrt{N^{-1}(|X_{s}^{\varepsilon}|+|Y_{s}^{\varepsilon}|)}+\sqrt
{\delta^{-1}(|\hat{X}_{s}^{\varepsilon^{\prime},\varepsilon}|+|\hat{Y}%
_{s}^{\varepsilon^{\prime},\varepsilon}|)}$, the constant $C>0$ depends on
$T$, $L$ and $\bar{\sigma}^{2}$. By the estimate of BSDE (\ref{nn-e2-5}) (see
\cite{EH}), we have%
\begin{align*}
E_{Q_{\varepsilon}}\left[  \left.  \int_{t}^{T}(|p_{s}^{Q_{\varepsilon}}%
|^{2}+|q_{s}^{Q_{\varepsilon}}|^{2})ds\right \vert \mathcal{F}_{t}\right]   &
\leq CE_{Q_{\varepsilon}}\left[  \left.  |\Phi_{x}(X_{T}^{\ast})|^{2}+\int
_{t}^{T}|g_{x}(s)|^{2}ds\right \vert \mathcal{F}_{t}\right] \\
&  \leq C\left(  1+\mathbb{\hat{E}}_{t}\left[  |X_{T}^{\ast}|^{2}\right]
+\int_{t}^{T}\mathbb{\hat{E}}_{t}\left[  |X_{s}^{\ast}|^{2}\right]  ds\right)
\\
&  \leq C(1+|X_{t}^{\ast}|^{2}),\text{ }P\text{-a.s.,}%
\end{align*}
where the constant $C>0$ depends on $T$, $L$, $\bar{\sigma}^{2}$ and
$\underline{\sigma}^{2}$. Then, by using a similar method in (\ref{nn-e3-25}),
we can deduce%
\begin{equation}%
\begin{array}
[c]{l}%
\left \vert E_{Q_{\varepsilon}}\left[  \left.  I_{\Phi}^{\varepsilon^{\prime
},\varepsilon}\Lambda_{T}+\int_{t}^{T}(I_{g}^{\varepsilon^{\prime}%
,\varepsilon}(s)+p_{s}^{Q_{\varepsilon}}I_{h}^{\varepsilon^{\prime
},\varepsilon}(s)+q_{s}^{Q_{\varepsilon}}I_{\sigma}^{\varepsilon^{\prime
},\varepsilon}(s))\Lambda_{s}d\langle B\rangle_{s}\right \vert \mathcal{F}%
_{t}\right]  \right \vert \\
\leq C(1+|X_{t}^{\ast}|^{2})(\phi^{N}(\delta)+\sqrt{1/N}+\sqrt{(|\varepsilon
^{\prime}|+|\varepsilon|)/\delta})|\varepsilon^{\prime}-\varepsilon|,\text{
}P\text{-a.s.,}%
\end{array}
\label{nn-e3-26}%
\end{equation}
where the constant $C>0$ depends on $T$, $L$, $\bar{\sigma}^{2}$ and
$\underline{\sigma}^{2}$.

Step 4: Relation between $p_{t}^{Q_{\varepsilon^{\prime}}}$ and $p_{t}%
^{Q_{\varepsilon}}$.

Let $\varepsilon^{\prime}>\varepsilon$, and let positive integer $N$ and
rational number $\delta \in(0,1]$ be given. For each given $Q_{\varepsilon}%
\in \mathcal{P}_{\varepsilon}^{\ast}(t,P)$, by (\ref{nn-e3-24}) and
(\ref{nn-e3-26}), we obtain $P$-a.s.
\begin{equation}
\hat{Y}_{t}^{\varepsilon^{\prime},\varepsilon}\geq p_{t}^{Q_{\varepsilon}}%
\hat{X}_{t}^{\varepsilon^{\prime},\varepsilon}-C(1+|X_{t}^{\ast}|^{2}%
)(\phi^{N}(\delta)+\sqrt{1/N}+\sqrt{(|\varepsilon^{\prime}|+|\varepsilon
|)/\delta})|\varepsilon^{\prime}-\varepsilon|. \label{nnn-e3-27}%
\end{equation}
Note that $\hat{X}_{t}^{\varepsilon,\varepsilon^{\prime}}=-\hat{X}%
_{t}^{\varepsilon^{\prime},\varepsilon}$ and $\hat{Y}_{t}^{\varepsilon
,\varepsilon^{\prime}}=-\hat{Y}_{t}^{\varepsilon^{\prime},\varepsilon}$. Then,
for each given $Q_{\varepsilon^{\prime}}\in \mathcal{P}_{\varepsilon^{\prime}%
}^{\ast}(t,P)$, we get $P$-a.s.%
\begin{equation}
\hat{Y}_{t}^{\varepsilon^{\prime},\varepsilon}\leq p_{t}^{Q_{\varepsilon
^{\prime}}}\hat{X}_{t}^{\varepsilon^{\prime},\varepsilon}+C(1+|X_{t}^{\ast
}|^{2})(\phi^{N}(\delta)+\sqrt{1/N}+\sqrt{(|\varepsilon^{\prime}%
|+|\varepsilon|)/\delta})|\varepsilon^{\prime}-\varepsilon| \label{nnn-e3-28}%
\end{equation}
by (\ref{nn-e3-24}) and (\ref{nn-e3-26}). It follows from $\hat{X}%
_{t}^{\varepsilon^{\prime},\varepsilon}=\varepsilon^{\prime}-\varepsilon$,
(\ref{nnn-e3-27}) and (\ref{nnn-e3-28}) that%
\begin{equation}
p_{t}^{Q_{\varepsilon^{\prime}}}\geq p_{t}^{Q_{\varepsilon}}-2C(1+|X_{t}%
^{\ast}|^{2})(\phi^{N}(\delta)+\sqrt{1/N}+\sqrt{(|\varepsilon^{\prime
}|+|\varepsilon|)/\delta})\text{, }P\text{-a.s.} \label{nnn-e3-29}%
\end{equation}

Step 5: The limit of $p_{t}^{Q_{\varepsilon}}$ as $\varepsilon \rightarrow0$.

For each given $\{Q_{n}\in \mathcal{P}_{\varepsilon_{n}}^{\ast}(t,P):n\geq1\}$
with $\varepsilon_{n}\downarrow0$. Since $\mathcal{P}$ is weakly compact, we
can choose a subsequence $\{ \tilde{Q}_{n}\in \mathcal{P}_{\tilde{\varepsilon
}_{n}}^{\ast}(t,P):n\geq1\}$ which converges weakly to $Q^{\ast}$. It is easy
to check that $\mathbb{\hat{E}}[|\xi_{\tilde{\varepsilon}_{n}}-\xi_{t}^{\ast
}|]\rightarrow0$ as $n\rightarrow \infty$. By (ii) of Proposition \ref{pro-1},
we have $Q^{\ast}\in \mathcal{P}^{\ast}(t,P)$. Let positive integer $N$ and
rational number $\delta \in(0,1]$ be given. For each $Q\in \mathcal{P}^{\ast
}(t,P)$, by (\ref{nnn-e3-29}), we obtain%
\[
p_{t}^{Q}\leq p_{t}^{\tilde{Q}_{n}}+2C(1+|X_{t}^{\ast}|^{2})(\phi^{N}%
(\delta)+\sqrt{1/N}+\sqrt{\tilde{\varepsilon}_{n}/\delta}),\text{
}P\text{-a.s.}%
\]
Then we have%
\[
\bar{p}_{t}\leq \underset{n\rightarrow \infty}{\lim \inf}p_{t}^{\tilde{Q}_{n}%
}+2C(1+|X_{t}^{\ast}|^{2})(\phi^{N}(\delta)+\sqrt{1/N}),\text{ }P\text{-a.s.,}%
\]
which implies%
\begin{equation}
\bar{p}_{t}\leq \underset{n\rightarrow \infty}{\lim \inf}p_{t}^{\tilde{Q}_{n}%
},\text{ }P\text{-a.s.,} \label{nnn-e3-30}%
\end{equation}
by taking $\delta \downarrow0$ and then $N\rightarrow \infty$. On the other
hand, for each $Q^{\prime}\in \mathcal{P}(t,P)$, it is easy to verify that%
\[
p_{t}^{Q^{\prime}}=E_{Q^{\prime}}\left[  \Phi_{x}(X_{T}^{\ast})l_{T}+\int
_{t}^{T}g_{x}(s)l_{s}d\langle B\rangle_{s}|\mathcal{F}_{t}\right]  ,\text{
}P\text{-a.s.,}%
\]
where $l_{s}=\exp \left(  \int_{t}^{s}\sigma_{x}(r)dB_{r}+\int_{t}^{s}%
(h_{x}(r)+g_{y}(r)-\frac{1}{2}|\sigma_{x}(r)|^{2})d\langle B\rangle
_{r}\right)  $ for $s\in \lbrack t,T]$. Since
\[
\Phi_{x}(X_{T}^{\ast})l_{T}+\int_{t}^{T}g_{x}(s)l_{s}d\langle B\rangle_{s}\in
L_{G}^{1}(\Omega_{T}),
\]
we obtain that, for each positive $\varsigma \in Lip(\Omega_{t})$,
\[
E_{\tilde{Q}_{n}}\left[  \varsigma \left(  \Phi_{x}(X_{T}^{\ast})l_{T}+\int
_{t}^{T}g_{x}(s)l_{s}d\langle B\rangle_{s}\right)  \right]  \rightarrow
E_{Q^{\ast}}\left[  \varsigma \left(  \Phi_{x}(X_{T}^{\ast})l_{T}+\int_{t}%
^{T}g_{x}(s)l_{s}d\langle B\rangle_{s}\right)  \right]
\]
as $n\rightarrow \infty$, which implies $E_{P}\left[  \varsigma p_{t}%
^{\tilde{Q}_{n}}\right]  \rightarrow E_{P}\left[  \varsigma p_{t}^{Q^{\ast}%
}\right]  $ as $n\rightarrow \infty$. Noting that
\[
|p_{t}^{\tilde{Q}_{n}}|\leq \mathbb{\hat{E}}_{t}\left[  \left \vert \Phi
_{x}(X_{T}^{\ast})l_{T}+\int_{t}^{T}g_{x}(s)l_{s}d\langle B\rangle
_{s}\right \vert \right]  ,
\]
by Fatou's lemma, we get%
\[
E_{P}\left[  \varsigma \underset{n\rightarrow \infty}{\lim \inf}p_{t}^{\tilde
{Q}_{n}}\right]  \leq \lim_{n\rightarrow \infty}E_{P}\left[  \varsigma
p_{t}^{\tilde{Q}_{n}}\right]  =E_{P}\left[  \varsigma p_{t}^{Q^{\ast}}\right]
\leq E_{P}\left[  \varsigma \underset{n\rightarrow \infty}{\lim \sup}%
p_{t}^{\tilde{Q}_{n}}\right]
\]
for each positive $\varsigma \in Lip(\Omega_{t})$, which implies%
\begin{equation}
\underset{n\rightarrow \infty}{\lim \inf}p_{t}^{\tilde{Q}_{n}}\leq
p_{t}^{Q^{\ast}}\leq \underset{n\rightarrow \infty}{\lim \sup}p_{t}^{\tilde
{Q}_{n}},\text{ }P\text{-a.s.} \label{nn-e3-30}%
\end{equation}
For each given $m\geq1$, set%
\[
\rho_{t}^{m}=\inf_{k\geq m}p_{t}^{Q_{k}}\text{, }\rho_{t}^{m,n}=\min_{m\leq
k\leq n}p_{t}^{Q_{k}}\text{ for }n\geq m\text{.}%
\]
By (\ref{nnn-e3-29}), we have%
\[
\rho_{t}^{m,n}\geq p_{t}^{Q_{n}}-2C(1+|X_{t}^{\ast}|^{2})(\phi^{N}%
(\delta)+\sqrt{1/N}+\sqrt{\varepsilon_{m}/\delta}),\text{ }P\text{-a.s.,}%
\]
which implies%
\[
\rho_{t}^{m}\geq \underset{n\rightarrow \infty}{\lim \sup}p_{t}^{Q_{n}%
}-2C(1+|X_{t}^{\ast}|^{2})(\phi^{N}(\delta)+\sqrt{1/N}+\sqrt{\varepsilon
_{m}/\delta}),\text{ }P\text{-a.s.}%
\]
Thus%
\[
\underset{n\rightarrow \infty}{\lim \inf}p_{t}^{Q_{n}}\geq \underset
{n\rightarrow \infty}{\lim \sup}p_{t}^{Q_{n}}-2C(1+|X_{t}^{\ast}|^{2})(\phi
^{N}(\delta)+\sqrt{1/N}),\text{ }P\text{-a.s.,}%
\]
which implies%
\begin{equation}
\underset{n\rightarrow \infty}{\lim \inf}p_{t}^{Q_{n}}=\underset{n\rightarrow
\infty}{\lim \sup}p_{t}^{Q_{n}},\text{ }P\text{-a.s.} \label{nnn-e3-31}%
\end{equation}
It follows from $Q^{\ast}\in \mathcal{P}^{\ast}(t,P)$ and (\ref{nnn-e3-30}%
)-(\ref{nnn-e3-31}) that%
\begin{equation}
\bar{p}_{t}=\lim_{n\rightarrow \infty}p_{t}^{Q_{n}},\text{ }P\text{-a.s.}
\label{nnn-e3-32}%
\end{equation}
Similarly, for each given $\{Q_{n}^{\prime}\in \mathcal{P}_{\varepsilon
_{n}^{\prime}}^{\ast}(t,P):n\geq1\}$ with $\varepsilon_{n}^{\prime}\uparrow0$,
we can prove that%
\begin{equation}
p_{t}=\lim_{n\rightarrow \infty}p_{t}^{Q_{n}^{\prime}},\text{ }P\text{-a.s.}
\label{nnn-e3-33}%
\end{equation}

Step 6: $D_{x}^{1,-}V(t,X_{t}^{\ast})\subseteq \lbrack p_{t},\bar{p}_{t}]$, $P$-a.s.

Let $Q_{n}$ and $\varepsilon_{n}$, $n\geq1$, be given in Step 5. It follows
from Theorem 4.5 in \cite{MJL0} that $Y_{t}^{\varepsilon_{n}}\geq
V(t,X_{t}^{\ast}+\varepsilon_{n})$, q.s. Obviously we can find a $\Omega^{1}$
with $P(\Omega^{1})=1$ such that%
\[
Y_{t}^{\ast}(\omega)=V(t,X_{t}^{\ast}(\omega))\text{, }Y_{t}^{\varepsilon_{n}%
}(\omega)\geq V(t,X_{t}^{\ast}(\omega)+\varepsilon_{n})\text{, }\bar{p}%
_{t}(\omega)=\lim_{n\rightarrow \infty}p_{t}^{Q_{n}}(\omega)\text{,}%
\]%
\[
\hat{Y}_{t}^{\varepsilon_{n}}(\omega)\leq p_{t}^{Q_{n}}(\omega)\varepsilon
_{n}+C(1+|X_{t}^{\ast}(\omega)|^{2})(\phi^{N}(\delta)+\sqrt{1/N}%
+\sqrt{\varepsilon_{n}/\delta})\varepsilon_{n}%
\]
for any $\omega \in \Omega^{1}$, $n\geq1$, $N\geq1$ and rational number
$\delta \in(0,1]$. For each given $\omega \in \Omega^{1}$, we have%
\[
a\varepsilon_{n}+o(\varepsilon_{n})\leq V(t,X_{t}^{\ast}(\omega)+\varepsilon
_{n})-V(t,X_{t}^{\ast}(\omega))\leq Y_{t}^{\varepsilon_{n}}(\omega
)-Y_{t}^{\ast}(\omega)
\]
if $a\in D_{x}^{1,-}V(t,X_{t}^{\ast}(\omega))$. Then we get%
\[
a\leq \bar{p}_{t}(\omega)+C(1+|X_{t}^{\ast}(\omega)|^{2})(\phi^{N}%
(\delta)+\sqrt{1/N}),
\]
which implies $a\leq \bar{p}_{t}(\omega)$ by taking $\delta \downarrow0$ and
then $N\rightarrow \infty$. Thus we obtain $D_{x}^{1,-}V(t,X_{t}^{\ast
})\subseteq(-\infty,\bar{p}_{t}]$, $P$-a.s. Let $Q_{n}^{\prime}$ and
$\varepsilon_{n}^{\prime}$, $n\geq1$, be given in Step 5. We can deduce
$D_{x}^{1,-}V(t,X_{t}^{\ast})\subseteq \lbrack p_{t},\infty)$, $P$-a.s., by
using the same method. Thus we get $D_{x}^{1,-}V(t,X_{t}^{\ast})\subseteq
\lbrack p_{t},\bar{p}_{t}]$, $P$-a.s.

Step 7: $\{p_{t}\} \subseteq D_{x}^{1,+}V(t,X_{t}^{\ast})$, $P$-a.s.

Let $A_{+}$ be the set of all rational numbers in $(0,1]$ and let
$Q_{\varepsilon}\in \mathcal{P}_{\varepsilon}^{\ast}(t,P)$ for $\varepsilon \in
A_{+}$. By using the same method as in the proof of (\ref{nnn-e3-32}), we can
get%
\[
\bar{p}_{t}=\lim_{\varepsilon \in A_{+},\varepsilon \downarrow0}p_{t}%
^{Q_{\varepsilon}},\text{ }P\text{-a.s.}%
\]
Since $V(t,X_{t}^{\ast}+\varepsilon)-V(t,X_{t}^{\ast})\leq \hat{Y}%
_{t}^{\varepsilon}$, $P$-a.s., and%
\[
\hat{Y}_{t}^{\varepsilon}\leq p_{t}^{Q_{\varepsilon}}\varepsilon
+C(1+|X_{t}^{\ast}|^{2})(\phi^{N}(\delta)+\sqrt{1/N}+\sqrt{\varepsilon/\delta
})\varepsilon,\text{ }P\text{-a.s.,}%
\]
for each $\varepsilon \in A_{+}$, we obtain%
\[
\underset{\varepsilon \in A_{+},\varepsilon \downarrow0}{\lim \sup}%
\frac{V(t,X_{t}^{\ast}+\varepsilon)-V(t,X_{t}^{\ast})}{\varepsilon}\leq \bar
{p}_{t}+C(1+|X_{t}^{\ast}|^{2})(\phi^{N}(\delta)+\sqrt{1/N}),\text{
}P\text{-a.s.,}%
\]
which implies%
\[
\underset{\varepsilon \in A_{+},\varepsilon \downarrow0}{\lim \sup}%
\frac{V(t,X_{t}^{\ast}+\varepsilon)-V(t,X_{t}^{\ast})}{\varepsilon}\leq \bar
{p}_{t},\text{ }P\text{-a.s.,}%
\]
by taking $\delta \downarrow0$ and then $N\rightarrow \infty$. Noting that
$V(t,\cdot)$ is continuous, we deduce%
\begin{equation}
\underset{\varepsilon \downarrow0}{\lim \sup}\frac{V(t,X_{t}^{\ast}%
+\varepsilon)-V(t,X_{t}^{\ast})}{\varepsilon}\leq \bar{p}_{t},\text{
}P\text{-a.s.} \label{nnn-e3-34}%
\end{equation}
Thus we obtain%
\begin{equation}
V(t,X_{t}^{\ast}+\varepsilon)-V(t,X_{t}^{\ast})\leq \bar{p}_{t}\varepsilon
+o(\varepsilon)\text{ for }\varepsilon>0,\text{ }P\text{-a.s.,}
\label{nnn-e3-35}%
\end{equation}
by (\ref{nnn-e3-34}). Similarly, we can get%
\begin{equation}
V(t,X_{t}^{\ast}+\varepsilon)-V(t,X_{t}^{\ast})\leq p_{t}\varepsilon
+o(\varepsilon)\text{ for }\varepsilon<0,\text{ }P\text{-a.s.}
\label{nnn-e3-36}%
\end{equation}
If $p_{t}=\bar{p}_{t}$, $P$-a.s., then we obtain $\{p_{t}\} \subseteq
D_{x}^{1,+}V(t,X_{t}^{\ast})$, $P$-a.s., by (\ref{nnn-e3-35}) and
(\ref{nnn-e3-36}).
\end{proof}

\begin{remark}
$D_{x}^{1,+}V(t,X_{t}^{\ast})$ may be empty if $p_{t}\not =\bar{p}_{t}$ (see
Example 1 in \cite{L1} under the sublinear case).
\end{remark}

Through the proof of Theorem \ref{MP-DPP-2}, we know that Steps 1-5 hold for
each fixed $P\in \mathcal{P}$, but Steps 6-7 hold only on the set
$\{Y_{t}^{\ast}=V(t,X_{t}^{\ast})\}$, $P$-a.s. So we can obtain the following conclusion.

\begin{corollary}
Let assumptions (A1)-(A2) hold and let $u^{\ast}$ be an optimal control for
control problem (\ref{nn-e2-2}). Then, for each fixed $t\in \lbrack0,T]$ and
$P\in \mathcal{P}$, we have%
\[
D_{x}^{1,-}V(t,X_{t}^{\ast})\subseteq \lbrack p_{t},\bar{p}_{t}]\text{ on
}\{Y_{t}^{\ast}=V(t,X_{t}^{\ast})\},\text{ }P\text{-a.s.,}%
\]%
\[
\{p_{t}\} \subseteq D_{x}^{1,+}V(t,X_{t}^{\ast})\text{ on }\{Y_{t}^{\ast
}=V(t,X_{t}^{\ast})\} \cap \{p_{t}=\bar{p}_{t}\},\text{ }P\text{-a.s.,}%
\]
where $p_{t}$ and $\bar{p}_{t}$ are defined in (\ref{nn-e3-15}).
\end{corollary}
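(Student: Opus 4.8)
The plan is to revisit the proof of Theorem \ref{MP-DPP-2} and to keep track of exactly where the hypothesis $P\in\mathcal{\tilde{P}}_{t}^{\ast}$ is used. As observed just before the statement, Steps 1--5 of that proof are carried out for an \emph{arbitrary} fixed $P\in\mathcal{P}$: the measures $Q_{\varepsilon}\in\mathcal{P}_{\varepsilon}^{\ast}(t,P)$ exist by Proposition \ref{pro-1}(i), the representations (\ref{nn-e3-18})--(\ref{nn-e3-19}) come from Proposition \ref{pro-1}(iii) and (\ref{nn-e3-9}), the bounds of Steps 3--4 are purely quantitative, and the limit passage in Step 5 uses only weak compactness of $\mathcal{P}$ together with Proposition \ref{pro-1}(ii). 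In particular $\mathcal{P}^{\ast}(t,P)$ is nonempty by Proposition \ref{pro-1}(i), so $p_{t}$, $\bar{p}_{t}$ of (\ref{nn-e3-15}) are well defined for the given $P$, and we already have, $P$-a.s.: the convergences (\ref{nnn-e3-32}), (\ref{nnn-e3-33}) exhibiting $\bar{p}_{t}$, $p_{t}$ as genuine limits along sequences $Q_{n}\in\mathcal{P}_{\varepsilon_{n}}^{\ast}(t,P)$ ($\varepsilon_{n}\downarrow0$) and $Q_{n}^{\prime}\in\mathcal{P}_{\varepsilon_{n}^{\prime}}^{\ast}(t,P)$ ($\varepsilon_{n}^{\prime}\uparrow0$), and the one-sided estimates (\ref{nnn-e3-27}), (\ref{nnn-e3-28}), valid for every $N\geq1$ and every rational $\delta\in(0,1]$. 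The only ingredient of Steps 6--7 not yet available for this general $P$ is the identity $Y_{t}^{\ast}=V(t,X_{t}^{\ast})$, which I would replace by working on the $\mathcal{F}_{t}$-event $A:=\{Y_{t}^{\ast}=V(t,X_{t}^{\ast})\}$.

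First I would redo Step 6 pointwise on $A$. Fix sequences $Q_{n}$, $\varepsilon_{n}\downarrow0$ and $Q_{n}^{\prime}$, $\varepsilon_{n}^{\prime}\uparrow0$ as above and choose a single set $\Omega^{1}$ with $P(\Omega^{1})=1$ carrying simultaneously the comparisons $Y_{t}^{\varepsilon_{n}}\geq V(t,X_{t}^{\ast}+\varepsilon_{n})$ and $Y_{t}^{\varepsilon_{n}^{\prime}}\geq V(t,X_{t}^{\ast}+\varepsilon_{n}^{\prime})$ (q.s., hence $P$-a.s., by Theorem 4.5 in \cite{MJL0}), the limits $\bar{p}_{t}=\lim_{n}p_{t}^{Q_{n}}$, $p_{t}=\lim_{n}p_{t}^{Q_{n}^{\prime}}$, and the estimates (\ref{nnn-e3-27})--(\ref{nnn-e3-28}) along these sequences. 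For $\omega\in\Omega^{1}\cap A$ and $a\in D_{x}^{1,-}V(t,X_{t}^{\ast}(\omega))$ one has, exactly as in Step 6,
\[
a\varepsilon_{n}+o(\varepsilon_{n})\leq V(t,X_{t}^{\ast}(\omega)+\varepsilon_{n})-V(t,X_{t}^{\ast}(\omega))=V(t,X_{t}^{\ast}(\omega)+\varepsilon_{n})-Y_{t}^{\ast}(\omega)\leq\hat{Y}_{t}^{\varepsilon_{n}}(\omega),
\]
the middle equality being precisely the defining property of $A$; dividing by $\varepsilon_{n}>0$, inserting (\ref{nnn-e3-28}), then letting $\delta\downarrow0$ and $N\to\infty$ yields $a\leq\bar{p}_{t}(\omega)$. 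The symmetric computation along $\varepsilon_{n}^{\prime}\uparrow0$ with (\ref{nnn-e3-27}) gives $a\geq p_{t}(\omega)$, whence $D_{x}^{1,-}V(t,X_{t}^{\ast})\subseteq[p_{t},\bar{p}_{t}]$ on $A$, $P$-a.s.

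Next I would repeat Step 7 on $A$. With $A_{+}$ the rational numbers in $(0,1]$ and $Q_{\varepsilon}\in\mathcal{P}_{\varepsilon}^{\ast}(t,P)$ for $\varepsilon\in A_{+}$, the argument producing $\bar{p}_{t}=\lim_{\varepsilon\in A_{+},\varepsilon\downarrow0}p_{t}^{Q_{\varepsilon}}$ is unchanged. On $A$, intersected with the $P$-full set carrying $Y_{t}^{\varepsilon}\geq V(t,X_{t}^{\ast}+\varepsilon)$ and the estimate for all $\varepsilon\in A_{+}$, we get for $\varepsilon\in A_{+}$
\[
V(t,X_{t}^{\ast}+\varepsilon)-V(t,X_{t}^{\ast})=V(t,X_{t}^{\ast}+\varepsilon)-Y_{t}^{\ast}\leq\hat{Y}_{t}^{\varepsilon}\leq p_{t}^{Q_{\varepsilon}}\varepsilon+C(1+|X_{t}^{\ast}|^{2})(\phi^{N}(\delta)+\sqrt{1/N}+\sqrt{\varepsilon/\delta})\varepsilon,
\]
so taking $\limsup_{\varepsilon\in A_{+},\varepsilon\downarrow0}$, then $\delta\downarrow0$ and $N\to\infty$, and finally using continuity of $V(t,\cdot)$ to drop the rationality restriction, one recovers (\ref{nnn-e3-35}) on $A$; the analogous estimate for $\varepsilon<0$ gives (\ref{nnn-e3-36}) on $A$. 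On $A\cap\{p_{t}=\bar{p}_{t}\}$ these two bounds say exactly that $p_{t}\in D_{x}^{1,+}V(t,X_{t}^{\ast})$, $P$-a.s.

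I do not expect a new analytic difficulty here: the substance is the verification, line by line, that every estimate and every limit invoked in Steps 6--7 of Theorem \ref{MP-DPP-2} is in fact already available $P$-a.s. for a general $P\in\mathcal{P}$ from Steps 1--5, so that only the value-function identity must be localised to $A$. The main obstacle is thus bookkeeping: assembling into one $P$-full set the various $P$-null exceptional sets (from the q.s. comparison $Y_{t}^{\varepsilon}\geq V(t,X_{t}^{\ast}+\varepsilon)$ over a countable set of $\varepsilon$'s, from the convergences of Step 5, and from the estimates along the chosen sequences), on which the pointwise argument of Steps 6--7 then runs verbatim on $A$. Measurability of $A$ and of $\{p_{t}=\bar{p}_{t}\}$, both $\mathcal{F}_{t}$-events modulo $P$-null sets, is immediate from their definitions.
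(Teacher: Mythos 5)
Your proposal is correct and follows exactly the route the paper intends: the paper states the corollary without a separate proof, deriving it from the observation that Steps 1--5 of Theorem \ref{MP-DPP-2} hold for every fixed $P\in\mathcal{P}$ while Steps 6--7 only need the identity $Y_{t}^{\ast}=V(t,X_{t}^{\ast})$, which you localise to the event $\{Y_{t}^{\ast}=V(t,X_{t}^{\ast})\}$. Your bookkeeping of the exceptional null sets and of where (\ref{nnn-e3-27})--(\ref{nnn-e3-28}) and the limits (\ref{nnn-e3-32})--(\ref{nnn-e3-33}) enter is precisely the verification the paper leaves implicit.
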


The following example shows that the conclusion obtained from Theorem
\ref{MP-DPP-2} is better than that from Theorem \ref{MP-DPP-1}.

\begin{example}
\label{exa1}Let $G(a):=\frac{1}{2}(\bar{\sigma}^{2}a^{+}-\underline{\sigma
}^{2}a^{-})$ for $a\in \mathbb{R}$ with $\bar{\sigma}\geq \underline{\sigma}>0$
and let $\tilde{G}:\mathbb{R}\rightarrow \mathbb{R}$ be a continuously
differentiable convex function dominated by $G$. Consider the following
control system:%
\[
\left \{
\begin{array}
[c]{l}%
dX_{t}^{u}=X_{t}^{u}d\langle B\rangle_{t}+u_{t}dB_{t},\text{ }X_{0}^{u}%
=x_{0}\in \mathbb{R},\\
Y_{t}^{u}=\mathbb{\tilde{E}}_{t}\left[  |X_{T}^{u}|^{2}-\int_{t}^{T}%
(2Y_{s}^{u}+u_{s})d\langle B\rangle_{s}\right]  ,
\end{array}
\right.
\]
where the control domain $U=[0,1]$. The related HJB equation is
\[
\partial_{t}V(t,x)+\inf_{v\in \lbrack0,1]}\tilde{G}(v^{2}\partial_{xx}%
^{2}V(t,x)+2x\partial_{x}V(t,x)-4V(t,x)-2v)=0,\text{ }V(T,x)=x^{2}.
\]
It is easy to verify that $V(T,x)=x^{2}+l(t)$, where $l(\cdot)$ satisfies the
following ordinary differential equation:%
\[
l^{\prime}(t)+\tilde{G}(-4l(t)-0.5)=0,\text{ }l(T)=0.
\]
By Theorem 5.10 in \cite{MJL0}, we know that $u_{t}^{\ast}=0.5$ for
$t\in \lbrack0,T]$ is an optimal control. It is easy to check that%
\[
Y_{t}^{\ast}=V(t,X_{t}^{\ast})=|X_{t}^{\ast}|^{2}+l(t),\text{ q.s.,}%
\]%
\[
p_{t}^{Q}=2X_{t}^{\ast},\text{ }q_{t}^{Q}=1,\text{ }N_{t}^{Q}=0\text{ for each
}Q\in \mathcal{P},
\]%
\[
\bar{K}_{T}=\frac{1}{2}\int_{0}^{T}(-4l(t)-0.5)d\langle B\rangle_{t}-\int
_{0}^{T}\tilde{G}(-4l(t)-0.5)dt,
\]
and%
\[
\xi^{\ast}=Y_{0}^{\ast}+\int_{0}^{T}X_{s}^{\ast}dB_{s}+\bar{K}_{T}.
\]
By Proposition 5.1 in \cite{LH}, we obtain that $\mathcal{\bar{P}}^{\ast
}=\mathcal{P}^{\ast}$ and $\mathcal{\bar{P}}^{\ast}$ contains only one
probability $P^{\ast}$, where%
\[
d\langle B\rangle_{t}=2\tilde{G}^{\prime}(-4l(t)-0.5)dt\text{ for }t\in
\lbrack0,T]\text{ under }P^{\ast}\text{.}%
\]
By Theorem \ref{MP-DPP-1}, we can get $\partial_{x}V(t,X_{t}^{\ast}%
)=2X_{t}^{\ast}$ under $P^{\ast}$. Note that $\mathcal{\tilde{P}}_{t}^{\ast
}=\mathcal{P}$ and $p_{t}=\bar{p}_{t}=2X_{t}^{\ast}$ under any $P\in
\mathcal{\tilde{P}}_{t}^{\ast}$. Then we obtain $\partial_{x}V(t,X_{t}^{\ast
})=2X_{t}^{\ast}$, q.s., by Theorem \ref{MP-DPP-2}.
\end{example}

\section*{Appendix}

\stepcounter{section}

For the convenience of the reader, we give the proof of (\ref{nnn-e3-4}) by
using the same method as in the proof of Theorem 4.4 in \cite{LH}.

\textbf{Proof of (\ref{nnn-e3-4}).} Set $\hat{Y}_{t}=Y_{t}^{\ast}-\bar{Y}_{t}%
$, $\tilde{g}_{y}(t)=\int_{0}^{1}g_{y}(t,X_{t}^{\ast},\bar{Y}_{t}+\alpha
\hat{Y}_{t},u_{t}^{\ast})d\alpha$, $\Lambda_{t}=\exp \left(  \int_{0}^{t}%
\tilde{g}_{y}(s)d\langle B\rangle_{s}\right)  $ for $t\in \lbrack0,T]$ and%
\[
\bar{\xi}=\Phi(X_{T}^{\ast})-\int_{0}^{T}\Sigma_{s}ds+\int_{0}^{T}%
g(s,X_{s}^{\ast},\bar{Y}_{s},u_{s}^{\ast})d\langle B\rangle_{s}.
\]
Then, by (\ref{nnn-e3-3}) and (\ref{n-nnn-e3-3}), we get%
\[
\hat{Y}_{t}\geq E_{P^{\ast}}\left[  \left.  \int_{t}^{T}\Sigma_{s}ds+\int
_{t}^{T}\tilde{g}_{y}(s)\hat{Y}_{s}d\langle B\rangle_{s}\right \vert
\mathcal{F}_{t}\right]  ,\text{ }P^{\ast}\text{-a.s.}%
\]
Set $\delta_{t}=\hat{Y}_{t}-E_{P^{\ast}}\left[  \left.  \int_{t}^{T}\Sigma
_{s}ds+\int_{t}^{T}\tilde{g}_{y}(s)\hat{Y}_{s}d\langle B\rangle_{s}\right \vert
\mathcal{F}_{t}\right]  $ for $t\in \lbrack0,T]$, it is easy to check that%
\begin{align*}
\delta_{t}  &  =\mathbb{\tilde{E}}_{t}[\xi^{\ast}]-\mathbb{\tilde{E}}_{t}%
[\bar{\xi}]-E_{P^{\ast}}[\xi^{\ast}-\bar{\xi}|\mathcal{F}_{t}]\\
&  =\mathbb{\tilde{E}}_{t}[\xi^{\ast}]-E_{P^{\ast}}[\xi^{\ast}|\mathcal{F}%
_{t}]+\alpha_{t}^{T}(P^{\ast}),\text{ }P^{\ast}\text{-a.s.}%
\end{align*}
Then we have%
\[
\hat{Y}_{t}-\delta_{t}=E_{P^{\ast}}\left[  \left.  \int_{t}^{T}\Sigma
_{s}ds+\int_{t}^{T}[\tilde{g}_{y}(s)(\hat{Y}_{s}-\delta_{s})+\tilde{g}%
_{y}(s)\delta_{s}]d\langle B\rangle_{s}\right \vert \mathcal{F}_{t}\right]  .
\]
From this we can deduce%
\begin{equation}
\hat{Y}_{0}-\delta_{0}=E_{P^{\ast}}\left[  \int_{0}^{T}\Sigma_{t}\Lambda
_{t}dt+\int_{0}^{T}\tilde{g}_{y}(t)\delta_{t}\Lambda_{t}d\langle B\rangle
_{t}\right]  . \label{ap-1}%
\end{equation}
Since $\Sigma_{t}\geq0$ and $\Lambda_{t}\geq C=\exp(-L\bar{\sigma}^{2}T)$ for
$t\in \lbrack0,T]$, we get%
\begin{equation}
\int_{0}^{T}\Sigma_{t}\Lambda_{t}dt\geq C\int_{0}^{T}\Sigma_{t}dt.
\label{ap-2}%
\end{equation}
In the following, we prove%
\begin{equation}
\delta_{0}+E_{P^{\ast}}\left[  \int_{0}^{T}\tilde{g}_{y}(t)\delta_{t}%
\Lambda_{t}d\langle B\rangle_{t}\right]  \geq C\left(  \mathbb{\tilde{E}}%
[\xi^{\ast}]-E_{P^{\ast}}[\xi^{\ast}]+\alpha_{0}^{T}(P^{\ast})\right)  .
\label{ap-3}%
\end{equation}
For each given $N\geq1$, set $\delta_{t}^{N}=\sum_{i=1}^{N}\delta_{t_{i}^{N}%
}I_{(t_{i-1}^{N},t_{i}^{N}]}(t)$, where $t_{i}^{N}=iTN^{-1}$ for $i\leq N$. By
Lemma 2.16 in \cite{HSW}, we have%
\begin{equation}
\lim_{N\rightarrow \infty}\sup_{|s-t|\leq TN^{-1}}E_{P^{\ast}}\left[
|\mathbb{\tilde{E}}_{s}[\xi^{\ast}]-\mathbb{\tilde{E}}_{t}[\xi^{\ast
}]|+|\mathbb{\tilde{E}}_{s}[\bar{\xi}]-\mathbb{\tilde{E}}_{t}[\bar{\xi
}]|\right]  =0. \label{ap-4}%
\end{equation}
Noting that (\ref{ap-4}) and $(E_{P^{\ast}}[\xi^{\ast}-\bar{\xi}%
|\mathcal{F}_{t}])_{t\leq T}$ is uniformly integrable under $P^{\ast}$, we can
deduce%
\[
\lim_{N\rightarrow \infty}E_{P^{\ast}}\left[  \int_{0}^{T}\tilde{g}%
_{y}(t)\delta_{t}^{N}\Lambda_{t}d\langle B\rangle_{t}\right]  =E_{P^{\ast}%
}\left[  \int_{0}^{T}\tilde{g}_{y}(t)\delta_{t}\Lambda_{t}d\langle
B\rangle_{t}\right]  .
\]
Note that $\int_{t_{i-1}^{N}}^{t_{i}^{N}}\tilde{g}_{y}(t)\Lambda_{t}d\langle
B\rangle_{t}=\Lambda_{t_{i}^{N}}-\Lambda_{t_{i-1}^{N}}$, $\delta_{T}=0$ and
$\Lambda_{0}=1$. Then we have%
\begin{align*}
\delta_{0}+E_{P^{\ast}}\left[  \int_{0}^{T}\tilde{g}_{y}(t)\delta_{t}%
^{N}\Lambda_{t}d\langle B\rangle_{t}\right]   &  =\delta_{0}+E_{P^{\ast}%
}\left[  \sum_{i=1}^{N}\delta_{t_{i}^{N}}(\Lambda_{t_{i}^{N}}-\Lambda
_{t_{i-1}^{N}})\right] \\
&  =\sum_{i=0}^{N-1}E_{P^{\ast}}\left[  \Lambda_{t_{i}^{N}}(\delta_{t_{i}^{N}%
}-E_{P^{\ast}}[\delta_{t_{i+1}^{N}}|\mathcal{F}_{t_{i}^{N}}])\right]  .
\end{align*}
Since $\delta_{t}=\mathbb{\tilde{E}}_{t}[\xi^{\ast}]-E_{P^{\ast}}[\xi^{\ast
}|\mathcal{F}_{t}]+\alpha_{t}^{T}(P^{\ast}),$ $P^{\ast}$-a.s., by Remark 3.6
in \cite{LH}, we get $P^{\ast}$-a.s.%
\[
\delta_{t_{i}^{N}}-E_{P^{\ast}}[\delta_{t_{i+1}^{N}}|\mathcal{F}_{t_{i}^{N}%
}]=\alpha_{t_{i}^{N}}^{t_{i+1}^{N}}(P^{\ast})-\left(  E_{P^{\ast}%
}[\mathbb{\tilde{E}}_{t_{i+1}^{N}}[\xi^{\ast}]|\mathcal{F}_{t_{i}^{N}%
}]-\mathbb{\tilde{E}}_{t_{i}^{N}}[\mathbb{\tilde{E}}_{t_{i+1}^{N}}[\xi^{\ast
}]]\right)  \geq0,
\]
where%
\[
\alpha_{t_{i}^{N}}^{t_{i+1}^{N}}(P^{\ast})=\underset{\xi \in L_{G}^{1}%
(\Omega_{t_{i+1}^{N}})}{ess\sup}^{P^{\ast}}\left(  E_{P^{\ast}}[\xi
|\mathcal{F}_{t_{i}^{N}}]-\mathbb{\tilde{E}}_{t_{i}^{N}}[\xi]\right)  ,\text{
}P^{\ast}\text{-a.s.}%
\]
Thus we get%
\begin{align*}
\sum_{i=0}^{N-1}E_{P^{\ast}}\left[  \Lambda_{t_{i}^{N}}(\delta_{t_{i}^{N}%
}-E_{P^{\ast}}[\delta_{t_{i+1}^{N}}|\mathcal{F}_{t_{i}^{N}}])\right]   &  \geq
C\sum_{i=0}^{N-1}E_{P^{\ast}}\left[  \delta_{t_{i}^{N}}-E_{P^{\ast}}%
[\delta_{t_{i+1}^{N}}|\mathcal{F}_{t_{i}^{N}}]\right] \\
&  =C\sum_{i=0}^{N-1}E_{P^{\ast}}\left[  \delta_{t_{i}^{N}}-\delta
_{t_{i+1}^{N}}\right] \\
&  =C\delta_{0},
\end{align*}
which implies (\ref{ap-3}). By (\ref{ap-1})-(\ref{ap-3}), we obtain
(\ref{nnn-e3-4}). $\Box$


\begin{thebibliography}{99}                                                                                               %


\bibitem {BCM}K. Bahlali, F. Chighoub, B. Mezerdi, On the relationship between
the stochastic maximum principle and dynamic programming in singular
stochastic control, Stochastics, 84 (2012), 233-249.

\bibitem {CIP}M.G. Crandall, H. Ishii, P.L. Lions, User's guide to viscosity
solutions of second order partial differential equations, Bull. Amer. Math.
Soc., 27 (1992), 1-67.

\bibitem {DHP11}L. Denis, M. Hu, S. Peng, Function spaces and capacity related
to a sublinear expectation: application to $G$-Brownian motion paths,
Potential Anal., 34 (2011), 139-161.

\bibitem {DM}L. Denis, C. Martini, A theoretical framework for the pricing of
contingent claims in the presence of model uncertainty, Ann. Appl. Probab., 16
(2006), 827-852.

\bibitem {EH}N. El Karoui, S. Huang, A general result of existence and
uniqueness of backward stochastic differential equations, in Backward
Stochastic Differential Equations, N. El Karoui, and L. Mazliak, eds., Pitman
Res. Notes Math. Ser., 364, Longman, Harlow, 1997, 27-36.

\bibitem {EJ2}L. Epstein, S. Ji, Ambiguous volatility and asset pricing in
continuous time, Rev. Financ. Stud., 26 (2013), 1740-1786.

\bibitem {FS}H. F\"{o}llmer, A. Schied, Convex measures of risk and trading
constraints, Finance Stochast., 6 (2002), 429-447.

\bibitem {FPW}J. P. Fouque, C. Pun, H. Wong, Portfolio optimization with
ambiguous correlation and stochastic volatilities, SIAM J. Control Optim., 54
(2016), 2309-2338.

\bibitem {HJ0}M. Hu, S. Ji, Stochastic maximum principle for stochastic
recursive optimal control problem under volatility ambiguity, SIAM J. Control
Optim., 54 (2016), 918-945.

\bibitem {HJ1}M. Hu, S. Ji, Dynamic programming principle for stochastic
recursive optimal control problem driven by a G-Brownian motion, Stochastic
Process. Appl., 127 (2017), 107-134.

\bibitem {MJL0}M. Hu, S. Ji, X. Li, Dynamic programming principle and
Hamilton-Jacobi-Bellman equation under nonlinear expectation, ESAIM Control
Optim. Calc. Var., 28 (2022), 1-21.

\bibitem {HJPS1}M. Hu, S. Ji, S. Peng, Y. Song, Backward stochastic
differential equations driven by G-Brownian motion, Stochastic Process. Appl.,
124 (2014), 759-784.

\bibitem {HJPS}M. Hu, S. Ji, S. Peng, Y. Song, Comparison theorem, Feynman-Kac
formula and Girsanov transformation for BSDEs driven by G-Brownian motion,
Stochastic Process. Appl., 124 (2014), 1170-1195.

\bibitem {HJX}M. Hu, S. Ji, X. Xue, Stochastic maximum principle, dynamic
programming principle, and their relationship for fully coupled
forward-backward stochastic controlled systems, ESAIM Control Optim. Calc.
Var., 26 (2020), 1-36.

\bibitem {HP09}M. Hu, S. Peng, On representation theorem of G-expectations and
paths of $G$-Brownian motion, Acta Math. Appl. Sin. Engl. Ser., 25 (2009), 539-546.

\bibitem {HP21}M. Hu, S. Peng, Extended conditional $G$-expectations and
related stopping times, Probab. Uncertain. Quant. Risk, 6 (2021), 369-390.

\bibitem {HSW}M. Hu, Y. Sun, F. Wang, Infinite horizon BSDEs under consistent
nonlinear expectations, Bull. Sci. math., 174 (2022), 1-42.

\bibitem {HYH}Y. Hu, Y. Lin, A. S. Hima, Quadratic backward stochastic
differential equations driven by G-Brownian motion: Discrete solutions and
approximation, Stochastic Process. Appl., 128 (2018), 3724-3750.

\bibitem {HTW}Y. Hu, S. Tang, F. Wang, Quadratic G-BSDEs with convex
generators and unbounded terminal conditions, Stochastic Process. Appl., 153
(2022), 363-390.

\bibitem {L1}X. Li, Relationship between maximum principle and dynamic
programming principle for stochastic recursive optimal control problem under
volatility uncertainty, Optim. Control Appl. Meth., 44 (2023), 2457-2475.

\bibitem {LH}X. Li, M. Hu, Maximum principle for stochastic optimal control
problem under convex expectation, arXiv:2408.10587v1, 2024.

\bibitem {LRT}Y. Lin, Z. Ren, N. Touzi, J. Yang, Second order backward SDE
with random terminal time, Electron. J. Probab., 25 (2020), 1-43.

\bibitem {MPZ}A. Matoussi, D. Possamai, C. Zhou, Robust Utility maximization
in non-dominated models with 2BSDEs, Math. Finance, 25 (2015), 258-287.

\bibitem {NSW}T. Nie, J. Shi, Z. Wu, Connection between MP and DPP for
stochastic recursive optimal control problems: viscosity solution framework in
the general case, SIAM J. Control Optim., 55 (2017), 3258-3294.

\bibitem {P1997}S. Peng, Backward SDE and related $g$-expectation, Backward
stochastic differential equations, in El N. Karoui and L. Mazliak, eds. Pitman
Res. Notes Math. Ser., 364, Longman, Harlow, 1997, 141-159.

\bibitem {Peng2004}S. Peng, Filtration consistent nonlinear expectations and
evaluations of contingent claims, Acta Math. Appl. Sin., 20(2) (2004), 1-24.

\bibitem {P07a}S. Peng, $G$-expectation, $G$-Brownian Motion and Related
Stochastic Calculus of It\^{o} type, Stochastic analysis and applications,
Abel Symp., Vol. 2, Springer, Berlin, 2007, 541-567.

\bibitem {P08a}S. Peng, Multi-dimensional $G$-Brownian motion and related
stochastic calculus under $G$-expectation, Stochastic Process. Appl., 118
(2008), 2223-2253.

\bibitem {P2019}S. Peng, Nonlinear Expectations and Stochastic Calculus under
Uncertainty, Springer (2019).

\bibitem {STZ}H. M. Soner, N. Touzi, J. Zhang, Martingale Representation
Theorem under G-expectation, Stochastic Process. Appl., 121 (2011), 265-287.

\bibitem {STZ11}H. M. Soner, N. Touzi, J. Zhang, Wellposedness of Second Order
Backward SDEs, Probab. Theory Related Fields, 153 (2012), 149-190.

\bibitem {J.Yong}J. Yong, X. Y. Zhou, Stochastic controls: Hamiltonian systems
and HJB equations, Springer (1999).
\end{thebibliography}
\end{document}